\documentclass[12pt]{article}
\usepackage[T1]{fontenc}
\usepackage{amsmath,amssymb,amsthm}
\usepackage{booktabs}
\usepackage{microtype}
\usepackage{url}
\usepackage[hidelinks]{hyperref}
\newtheorem{theorem}{Theorem}[section]
\newtheorem{lemma}[theorem]{Lemma}
\newtheorem{corollary}[theorem]{Corollary}
\newtheorem{proposition}[theorem]{Proposition}
\theoremstyle{definition}
\newtheorem{definition}[theorem]{Definition}
\theoremstyle{remark}
\newtheorem{remark}[theorem]{Remark}
\numberwithin{equation}{section}

\newcommand{\ind}{\mathbf 1}
\DeclareMathOperator{\rad}{rad}
\DeclareMathOperator{\out}{out}
\DeclareMathOperator{\inn}{in}
\DeclareMathOperator{\wt}{wt}
\hypersetup{pdftitle={Surjectivity of the Enots--Wolley Sequence},
  pdfauthor={Nathan Myles Nichols},pdfsubject={Normalized prime-exchange proof draft}}
\title{Surjectivity of the Enots--Wolley Sequence}
\author{Nathan Myles Nichols}
\date{September 12, 2026}
\begin{document}
\maketitle
\begin{center}
\end{center}
\begin{abstract}
We prove that the Enots--Wolley sequence contains every positive integer with
at least two distinct prime divisors. Suppose, toward a contradiction, that
some eligible integer is omitted, and consider its finite set of prime
divisors. The local rules then severely restrict how terms involving these
primes can occur: after a finite initial segment, terms divisible by some but
not all of them can outnumber terms divisible by all of them by at most a
fixed constant. A prime-exchange construction gives the opposite conclusion
at large scales. From almost every term divisible by all of the chosen primes,
it produces enough smaller earlier terms divisible by only some of them; a
weighted double count makes this excess quantitative and yields a
contradiction. It follows that any omission would force every sufficiently late term to have
a prime divisor in one fixed finite set. Prime recurrence and a disjoint-cover
argument rule out such a finite obstruction, proving surjectivity. The only
analytic number-theoretic inputs are the prime number theorem and Mertens'
estimate for reciprocal primes.
\end{abstract}
\section{Introduction}
\label{sec:introduction}

A sequence is \emph{lexicographically earlier} than another if its term at the
first index where they differ is smaller. A lexicographically earliest
sequence subject to given conditions therefore makes the smallest possible
choice at each index that is compatible with a complete sequence satisfying
those conditions. When the sequence is required to be infinite, this is not
necessarily the same as taking the smallest unused integer satisfying the
conditions visible at the current step: that choice might prevent the sequence
from continuing.

The Enots--Wolley sequence, introduced by Shannon and Sloane as OEIS A336957,
is the lexicographically earliest infinite sequence of distinct positive
integers beginning with $1,2$ in which each later term shares a prime divisor
with its predecessor and is coprime to the term two places earlier
\cite{oeis-ew}. Its first terms are
\[
1,2,6,15,35,14,12,33,55,10,18,21,77,\ldots.
\]
The name reverses ``Yellowstone'': the Yellowstone permutation exchanges the
roles of the preceding two terms in these divisibility conditions
\cite{yellowstone,oeis-yellowstone}.

For Enots--Wolley, indefinite continuation is equivalent to requiring each new
term to introduce a prime divisor absent from its predecessor. For example,
after $1,2$, choosing $4$ would satisfy the two visible divisibility conditions
but leave no possible next term; the continuation condition excludes $4$ and
permits $6$. Section~\ref{sec:setup} develops this equivalent local rule and
the elementary dynamics used later.

Write $\omega(m)$ for the number of distinct prime divisors of $m$. Every
noninitial Enots--Wolley term has $\omega(m)\ge2$. The surjectivity conjecture
recorded with A336957 asserts that every such integer occurs
\cite[Conjecture 1]{oeis-ew}. We prove that conjecture.

\begin{theorem}[Surjectivity]\label{thm:main}
The Enots--Wolley sequence is a permutation of
\[
\{1,2\}\cup\{m\ge1:\omega(m)\ge2\}.
\]
\end{theorem}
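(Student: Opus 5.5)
We will follow the route announced in the abstract. First we establish the local rule from Section~\ref{sec:setup}: for $n\ge3$ the term $a(n)$ is the least unused $m$ with $\gcd(m,a(n-1))>1$, $\gcd(m,a(n-2))=1$, and $m$ having a prime divisor not dividing $a(n-1)$. This shows at once that every noninitial term has $\omega\ge2$ and that the terms are distinct. It remains to show that no eligible integer is omitted. Suppose, toward a contradiction, that some $m_0$ with $\omega(m_0)\ge2$ never occurs, and let $P$ be its set of prime divisors.

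\textbf{Step 1 (local counting).} Call a term \emph{full} if it is divisible by every prime of $P$, and \emph{partial} if it is divisible by some but not all of them. The first goal is to show that, after a finite initial segment, the number of partial terms up to any index exceeds the number of full terms by at most a constant $C=C(m_0)$.

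The mechanism is as follows. Once all integers below $m_0$ that could compete with $m_0$ have been used, $m_0$ would be admissible whenever the predecessor shares a prime with $P$, the term two back is coprime to $m_0$, and the predecessor misses some prime of $P$. So every time a partial term appears whose predecessor is coprime to $P$, the next choice is forced to be at most $m_0$; eventually it must equal $m_0$, which is a contradiction. Hence partial terms must arise only in constrained configurations, namely adjacent to full terms or to other partial terms in short chains. Analysing these chains through the parity of the coprimality condition should give the bound $\#\text{partial}\le\#\text{full}+C$.

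\textbf{Step 2 (prime exchange).} Next we show the opposite inequality at large scale. For a full term $a(n)=x$ whose other part is controlled, we replace one prime $p\in P$ by a suitable prime $q$. The resulting number $x'=xq/p^{v_p(x)}$ (or a smaller variant) is partial and smaller, and it would have been admissible at an earlier step. By minimality it must therefore already have appeared earlier, as a smaller partial term. Choosing $q$ from a range of primes, and weighting each pair by $1/q$, we double count pairs (full term, partial term produced by exchange). The prime number theorem controls how many primes are available in dyadic ranges. Mertens' estimate $\sum_{q\le y}1/q=\log\log y+O(1)$ shows that each partial term has small total weighted in-degree while each full term has weighted out-degree tending to infinity. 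Consequently the partial terms up to $N$ outnumber the full terms by an unbounded amount, contradicting Step 1. The main obstacle is this step: we must ensure that almost every full term admits exchanges that are genuinely admissible at the relevant earlier time, which means checking the gcd conditions with both neighbours, and we must bound the multiplicity of preimages. I would first try restricting to exchange primes $q$ larger than every prime of the two neighbouring terms, so that the gcd conditions are inherited from $x$.

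\textbf{Step 3 (finite obstruction).} The contradiction shows that any omission forces a weaker structural conclusion: every sufficiently late term has a prime divisor in a fixed finite set $S$. To rule this out, we first use prime recurrence: every prime divides infinitely many terms, which follows from the greedy rule, since otherwise small multiples of that prime would eventually be chosen. In particular, large primes $q\notin S$ appear as divisors of terms. We then apply a disjoint-cover argument. The terms $q\cdot s$ with $s$ built from $S$ cover only a density-zero portion, so the greedy rule must eventually select a number free of $S$. This contradicts the structural conclusion and completes the proof of Theorem~\ref{thm:main}.
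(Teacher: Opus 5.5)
Your outline follows the architecture announced in the abstract, but the two substantive steps fail as described.

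\textbf{Step 2.} Exchanging a target prime $p\in P$ for a new prime $q$ gives $x'=xq/p^{v_p(x)}<x$ only when $q<p^{v_p(x)}$, and $p^{v_p(x)}$ is a small target factor. For every full $x$ with $v_p(x)=1$, the only candidates are primes below $p$, and there are none when $p=2$. These values make up a proportion $1-1/p$ of full integers, and nothing in your argument excludes them. So the exchange has essentially no freedom, and the unbounded out-degree you want from Mertens is not available. Restricting $q$ to exceed every prime factor of the neighbouring terms would typically make $x'>x$, where greedy minimality says nothing.

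The missing idea is that the exchange must delete a \emph{large} factor. At a proper record $H_N$ with $X=pH_N$, the paper writes a good full value as $f=cdr$. Here $c$ is its entire target part and $r>Y=(\log X)^3$ is an outside prime of exponent one. It replaces $cr$ by $t\ell$, where $t$ is one of the two smallest target primes and $\ell$ is a prime in $(3cr/4t,\,cr/t)$ not dividing $A_fd$. Then $w=td\ell\in(\tfrac34 f,f)$ is proper and admissible at the selection of $f$. The retained target $t$ and a fixed protected outside prime $h_f\mid d$, which is never removed, supply overlap and novelty against the predecessor. The condition $\ell\nmid A_f$ gives lag-two disjointness, at a cost of only $O(\log X)$ excluded primes.

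The double count is also not ``unbounded out-degree versus bounded in-degree''. With edge weight $\log r/(cr\,\kappa(f))$, each good source sends at least $\alpha_T-o(1)$ and each destination receives at most $\beta_T+o(1)$. For fixed $w,\ell,c$ the removed prime $r$ lies in a window of multiplicative width $4/3$, and $\kappa(w)=\kappa(f)$ cancels the sum over $\ell$. The contradiction rests on the narrow margin $\alpha_T/\beta_T\ge 5/(16\log(4/3))>1$.

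Making ``almost every full term'' rigorous needs ingredients you do not mention:
the envelope $f<pH_N$; quantitative recurrence bounding the whole history by a power of $H_N$; the resulting bound $F_N\gg X/\log\log X$; and bounds of $O(X/(\log\log X)^2)$ for full values that have target part above $(\log\log X)^4$, a repeated prime above $Y$, or fewer than four primes above $Y$ (the last via a fourth-moment estimate).

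\textbf{Step 3.} The greedy rule takes the least admissible integer at each step, and no density statement forces that choice to be $S$-free. Lag-two disjointness only requires $a_n$ and $a_{n+2}$ to meet $S$ in disjoint subsets, which is locally consistent once $|S|\ge2$. The paper's disjoint-cover argument instead uses the eventual-cover conclusion a second time. With $S=P(m_0)$ and a cover threshold $N_S$, choose primes $r,s\notin S$ dividing none of $a_1,\dots,a_{N_S}$. Then $rs$ cannot occur by index $N_S$, since its primes have not yet appeared, nor afterwards, since its support misses $S$. So $rs$ is omitted, and $\{r,s\}$ becomes a second eventual cover, disjoint from $S$. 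That is impossible: a late prime debut has the form $bQ$ with a single previously seen prime $b$ (Lemma~\ref{lem:debut-pair}), so for $Q\notin S\cup\{r,s\}$ it cannot meet both covers.

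\textbf{Step 1.} Your inequality is not available unconditionally, since it says nothing about an infinite covered tail, which is precisely the eventual-cover case. This is why the first stage yields only the cover conclusion. The actual mechanism is that after a $\neg\operatorname{Sat}$-cutoff every covered episode starts with a full term and has length at most two, because lag-two disjointness from the full starter makes the term two places later free. Hence, when infinitely many covered episodes begin after the cutoff, each later partial term is immediately preceded by its own full term, and the partial count exceeds the full count by at most a constant.
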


Section~\ref{sec:setup} develops the local greedy rule, elementary prime
dynamics, the analytic notation used later, and the target terminology for
the proof. With that language in place, Section~\ref{sec:proof-outline} gives
a proof outline. Section~\ref{sec:quantitative-recurrence} proves qualitative
prime recurrence, introduces near-height rescaling, and establishes the general
quantitative recurrence estimate. Section~\ref{sec:full-entry} applies these
tools to episodes after a $\neg\operatorname{Sat}$-cutoff and proper records.
Section~\ref{sec:exceptions} shows that almost all relevant full sources
satisfy the arithmetic conditions needed for exchange. Section~\ref{sec:exchange}
constructs the local prime exchange, and Section~\ref{sec:weighted} proves the
outgoing and incoming weight estimates whose strict gap excludes infinitely
many episodes after a $\neg\operatorname{Sat}$-cutoff.
Section~\ref{sec:surjectivity} combines the resulting eventual-cover
property with the impossibility of two disjoint eventual covers to prove
surjectivity.

\section{Preliminaries and elementary dynamics}
\label{sec:setup}

\subsection{Prime support and the local greedy rule}

For a positive integer $m$, let
\[
P(m)=\{r:r\text{ is prime and }r\mid m\},\qquad
\omega(m)=|P(m)|.
\]
Thus $P(m)$ is the \emph{prime support} of $m$, and $\omega(m)$ counts its
distinct prime divisors, without multiplicity. In particular,
$P(1)=\varnothing$ and $\omega(1)=0$. Put
\[
\mathcal A=\{m\ge1:\omega(m)\ge2\}.
\]

Starting from $a_1=1$, $a_2=2$, the local form of the Enots--Wolley rule takes
$a_n$ for $n\ge3$ to be the least unused member of $\mathcal A$ satisfying
\begin{align}
P(a_n)\cap P(a_{n-1})&\ne\varnothing
&&\text{(overlap)},\label{eq:overlap}\\
P(a_n)\cap P(a_{n-2})&=\varnothing
&&\text{(lag-two disjointness)},\label{eq:lagtwo}\\
P(a_n)\setminus P(a_{n-1})&\ne\varnothing
&&\text{(novelty)}.\label{eq:novelty}
\end{align}
We call an integer \emph{locally admissible} when it satisfies these support
conditions, whether or not it has already been used. An \emph{admissible
candidate} is a locally admissible integer which has not yet been selected.

The next observation explains why novelty is exactly the continuation
condition hidden in the lexicographically earliest infinite formulation.

\begin{lemma}[Infinite continuation criterion]
\label{lem:continuation}
Let $n\ge3$. Suppose $a_1,\ldots,a_{n-1}$ is a finite prefix beginning with
$1,2$, consisting of distinct terms, and satisfying overlap and lag-two
disjointness at every applicable index. Let $a_n$ be an unused positive
integer satisfying
\[
P(a_n)\cap P(a_{n-1})\ne\varnothing,
\qquad
P(a_n)\cap P(a_{n-2})=\varnothing.
\]
Then the extended prefix
\[
a_1,\ldots,a_n
\]
admits an infinite continuation satisfying the same two divisibility
conditions if and only if
\[
P(a_n)\setminus P(a_{n-1})\ne\varnothing.
\]
\end{lemma}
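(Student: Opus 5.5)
Both directions can be handled directly, with no appeal to earlier results. Write $A=P(a_{n-1})$ and $B=P(a_n)$; by hypothesis $B\cap A\ne\varnothing$.

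\emph{Necessity.} Suppose $B\setminus P(a_{n-1})=\varnothing$, so $B\subseteq A$. Any next term $a_{n+1}$ must satisfy $P(a_{n+1})\cap B\ne\varnothing$ (overlap with $a_n$) and $P(a_{n+1})\cap A=\varnothing$ (lag-two disjointness with $a_{n-1}$). Since $B\subseteq A$, any prime common to $a_{n+1}$ and $a_n$ lies in $A$, which contradicts the second condition. So no valid $a_{n+1}$ exists, and in particular there is no infinite continuation. This direction needs only the two displayed conditions, applied at index $n+1$.

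\emph{Sufficiency.} Suppose there is a prime $p\in B\setminus A$. I will build an infinite continuation inductively, maintaining the invariant that the current last term $x=a_k$ has a prime $p_k\mid a_k$ with $p_k\nmid a_{k-1}$. The invariant holds at $k=n$ with $p_n=p$.

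Given the invariant, I choose a prime $q$ not dividing $a_1\cdots a_k$; such a prime exists because there are infinitely many primes and the prefix is finite. Then I set $a_{k+1}=p_kq$ and check each requirement.
\begin{itemize}
\item Overlap holds, since $p_k$ divides both $a_{k+1}$ and $a_k$.
\item Lag-two disjointness holds, since $P(a_{k+1})=\{p_k,q\}$, where $p_k\nmid a_{k-1}$ by the invariant and $q\nmid a_{k-1}$ by choice.
\item Distinctness holds, since $q$ divides no earlier term.
\item The invariant passes on with $p_{k+1}=q$, because $q\nmid a_k$.
\end{itemize}
Iterating this step gives the infinite continuation. The only point needing care is $n=3$, where $a_{n-2}=1$: there the lag-two condition is vacuous, and the construction is unchanged.

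There is no real obstacle. The only thing to watch is that the invariant is exactly the novelty condition, so the same fresh-prime construction propagates it at every step.
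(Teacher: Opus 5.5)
Your proof is correct and is essentially the paper's: the necessity argument is identical, and choosing a fresh prime $q$ at each step reproduces the chain $rQ_1,\,Q_1Q_2,\,Q_2Q_3,\ldots$ that the paper specifies all at once. Your closing remark about $n=3$ is slightly off target, since the first lag-two check your construction performs is for $a_{n+1}$ against $a_{n-1}$ rather than against $a_{n-2}=1$, but nothing in the argument depends on that remark.
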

\begin{proof}
If $P(a_n)\subseteq P(a_{n-1})$, then every possible successor $a_{n+1}$
which overlaps $a_n$ also shares a prime with $a_{n-1}$. This contradicts the
lag-two disjointness condition required at index $n+1$, namely
\[
P(a_{n+1})\cap P(a_{n-1})=\varnothing.
\]
Hence no continuation exists.

Conversely, choose
\[
r\in P(a_n)\setminus P(a_{n-1}).
\]
Choose distinct primes $Q_1,Q_2,\ldots$ which divide none of
$a_1,\ldots,a_n$ and are different from $r$. Append
\[
rQ_1,\quad Q_1Q_2,\quad Q_2Q_3,\quad\ldots.
\]
The first appended term overlaps $a_n$ through $r$, is disjoint from
$a_{n-1}$, and introduces $Q_1$. The next term overlaps $rQ_1$ through
$Q_1$, is disjoint from $a_n$, and introduces $Q_2$. Thereafter each term
overlaps its predecessor through the newest prime, is disjoint from the term
two indices earlier, and introduces a fresh prime. All appended terms have two
distinct prime divisors and are distinct. Hence the continuation is infinite.
\end{proof}

Thus the extendible choices are exactly the unused integers satisfying
\eqref{eq:overlap}--\eqref{eq:novelty}; choosing the least such integer gives
the original lexicographically earliest infinite sequence
\cite[Theorem 1]{oeis-ew}. In particular, the sequence never gets stuck.
After the first greedy selection, novelty of the preceding term always supplies
an active prime
\[
r\in P(a_{n-1})\setminus P(a_{n-2}),
\]
and pairing $r$ with any globally unseen prime gives an unused candidate.

\begin{remark}[Support conditions and numerical priority]
For example, suppose
\[
P(a_{n-2})=\{2,7\},\qquad P(a_{n-1})=\{3,5,7\}.
\]
These supports satisfy the required overlap for consecutive terms. The
support $\{3,11\}$ is locally admissible for $a_n$: it retains $3$, avoids
$2,7$, and introduces $11$. The support $\{3,5\}$ fails novelty, while
$\{2,3\}$ fails lag-two disjointness. Among all locally admissible supports
and exponent choices, the next term is the least \emph{unused integer}.
\end{remark}

\begin{lemma}[Smaller admissible integers are historical]
\label{lem:smaller-historical}
At the selection of $a_n$, every locally admissible positive integer
$m<a_n$ has already occurred. Consequently, if an eligible integer $m$
never occurs, there is an index after which $m$ is never locally admissible.
\end{lemma}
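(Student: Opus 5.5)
The first claim follows directly from the greedy definition, and the second is its contrapositive. We take the first claim first. By Lemma~\ref{lem:continuation} and the discussion after it, $a_n$ is the least unused integer satisfying \eqref{eq:overlap}--\eqref{eq:novelty}. We also need it to lie in $\mathcal A$. Any integer satisfying overlap and novelty has at least one prime shared with $a_{n-1}$ and at least one prime not in $P(a_{n-1})$, so $\omega\ge2$ automatically.

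Now suppose $m<a_n$ is locally admissible. Then $m\in\mathcal A$ satisfies all three support conditions at step $n$. If $m$ were unused at that moment, it would be an admissible candidate smaller than the chosen term, contradicting minimality of $a_n$. Hence $m$ has already occurred, that is, $m=a_k$ for some $k<n$. At the index $n=3$ one checks the statement the same way, since the greedy rule applies from $n\ge3$ on.

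For the consequence, let $m$ be eligible and never occurring. The sequence is infinite and its terms are distinct positive integers. So only finitely many terms are $\le m$, and there is an index $N$ with $a_n>m$ for all $n\ge N$.

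For any $n\ge N$, suppose $m$ were locally admissible at the selection of $a_n$. Since $m<a_n$, the first part says $m$ has already occurred, which is a contradiction. Hence $m$ is never locally admissible after index $N$.

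The only point needing care, and it is minor, is the first claim's reliance on the equivalence between the lexicographically earliest infinite sequence and the local greedy rule. Lemma~\ref{lem:continuation} supplies that equivalence. We also use that the sequence never gets stuck, which guarantees $a_n$ exists for every $n$, so the index $N$ in the second part is well defined.
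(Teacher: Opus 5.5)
Your proof is correct and takes essentially the paper's route: the first claim is greedy minimality, and the second uses that only finitely many selected terms lie below the omitted $m$. The paper phrases this as each admissible index selecting a distinct integer below $m$, and you phrase it as a threshold $N$ beyond which every term exceeds $m$; the only slip is calling the second claim the ``contrapositive'' of the first, which it is not, though the argument you actually give for it is sound.
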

\begin{proof}
The first assertion is the greedy rule. Each later index at which an omitted
$m$ is locally admissible would select a distinct integer below $m$. There
are only finitely many such integers.
\end{proof}

\subsection{Prime debuts and the unseen-prime frontier}
\label{sec:basic-dynamics}

A prime \emph{debuts} at its first occurrence as a divisor of a selected term.
It is \emph{unseen} before index $n$ if it divides none of
$a_1,\ldots,a_{n-1}$.

\begin{lemma}[A noninitial debut is a product of two distinct primes]
\label{lem:debut-pair}
If a prime $Q$ debuts in $H=a_n$ at an index $n\ge3$, then $H=bQ$ for a
unique prime $b$ dividing the predecessor. At most one previously unseen
prime debuts at a noninitial selection, and prime debuts occur in increasing
prime order.
\end{lemma}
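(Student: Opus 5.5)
The plan is a minimality argument based on the greedy rule, Lemma~\ref{lem:smaller-historical}. Suppose the prime $Q$ debuts in $H=a_n$ with $n\ge3$. We show that $H=bQ$, then deduce uniqueness of $b$, then the two remaining assertions.

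First we find $b$. By overlap \eqref{eq:overlap}, $H$ shares some prime $b$ with $a_{n-1}$. Since $Q$ is unseen before index $n$, we have $Q\nmid a_{n-1}$, so $b\ne Q$, and $bQ\mid H$. Next we check that $bQ$ is a locally admissible candidate at index $n$:
\begin{itemize}
\item it has $\omega(bQ)=2$, so $bQ\in\mathcal A$;
\item it overlaps $a_{n-1}$ through $b$;
\item it introduces $Q\notin P(a_{n-1})$, so novelty holds;
\item it is disjoint from $a_{n-2}$, since $b\in P(H)$ and $P(H)\cap P(a_{n-2})=\varnothing$, while $Q\nmid a_{n-2}$ because $Q$ is unseen.
\end{itemize}
It is also unused, because $Q$ divides no earlier term. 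If $bQ<H$, this contradicts the greedy choice of $H$, since by Lemma~\ref{lem:smaller-historical} every smaller admissible integer has already occurred. Hence $H=bQ$.

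Uniqueness of $b$ is then immediate. $P(H)=\{b,Q\}$ and $Q\nmid a_{n-1}$, so $b$ is the only prime of $H$ dividing the predecessor. For the second assertion, $H=bQ$ with $b\mid a_{n-1}$ already seen, so $Q$ is the only prime that can debut at index $n$.

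The ordering claim carries the main content. Suppose $Q$ debuts at index $n$ and $Q'<Q$ is still unseen before index $n$. Repeating the check above, $bQ'$ is a locally admissible unused integer smaller than $bQ=H$. That contradicts greedy minimality, so every prime below $Q$ has appeared by index $n$. Consequently any prime debuting later, at a noninitial index, exceeds $Q$. The initial debut of $2$ at index $2$ also fits, since $2$ is the smallest prime.

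The one step needing care is the lag-two check for $bQ'$: we need $b\nmid a_{n-2}$, and this follows because $b$ divides $H$, which already satisfies lag-two disjointness. Beyond that the argument is routine bookkeeping with the definitions of unseen and debut. The reading of ``increasing prime order'' intended here is that each debut prime is the least unseen prime at its index.
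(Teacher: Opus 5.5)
Your proof is correct and follows essentially the same route as the paper. Overlap supplies $b$, and the unused admissible candidate $bQ$ together with $bQ\mid H$ forces $H=bQ$; a still-unseen smaller prime $Q'<Q$ would make $bQ'$ a smaller admissible candidate, which gives increasing debut order, and your explicit checks (via lag-two disjointness of $H$, $b\nmid a_{n-2}$, and $bQ'$ unused) are the ones the paper compresses into a line.
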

\begin{proof}
Overlap gives a prime $b\in P(H)\cap P(a_{n-1})$. Since $H=a_n$ is lag-two
disjoint from $a_{n-2}$, we have $b\nmid a_{n-2}$. The prime $Q$ is unseen,
so $bQ$ is unused and locally admissible: $b$ supplies overlap, $Q$ supplies
novelty, and both avoid $a_{n-2}$. Thus $H\le bQ$. Conversely, $b$ and $Q$
are distinct prime divisors of $H$, so $bQ\mid H$ and $bQ\le H$. Therefore
$H=bQ$. Its only old prime is $b=H/Q$, and no second prime can debut there.

If a smaller prime $R<Q$ were still unseen, $bR$ would be a smaller unused
admissible candidate at the same selection. This proves increasing debut
order, including the initial appearance of $2$.
\end{proof}

Let $Q_N$ denote the least unseen prime \emph{after} the first $N$ terms,
and let $M_N=\max_{i\le N}a_i$.

\begin{lemma}[A fixed unseen prime bounds the entire prefix]
\label{lem:unseen-ceiling}
For every $N\ge2$,
\[
M_N<Q_N^2.
\]
Consequently every prime eventually appears.
\end{lemma}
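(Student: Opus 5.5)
We prove $M_N<Q_N^2$ by tracking the selections one at a time, and the key input is an explicit candidate built from $Q_N$. Write $Q=Q_N$. The first few indices ($N=2$, where $M_2=2<9$, and $N=3$) are checked directly. For $N\ge 3$, fix an index $n\le N$ with $n\ge 3$ and bound $a_n$. Since $Q$ is unseen after $N$ terms, it is unseen at every selection $n\le N$. By the remark after Lemma~\ref{lem:continuation}, for $n\ge 4$ the term $a_{n-1}$ has an active prime $r\in P(a_{n-1})\setminus P(a_{n-2})$. Then $rQ$ is unused, since $Q$ is unseen, and it is locally admissible: $r$ gives overlap, $Q$ gives novelty, and neither $r$ nor $Q$ divides $a_{n-2}$. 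The greedy rule therefore gives $a_n\le rQ$.

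It remains to show $rQ<Q^2$, that is, $r<Q$. The prime $r$ divides $a_{n-1}$, so $r$ has already been seen. By Lemma~\ref{lem:debut-pair}, primes debut in increasing order, and $Q$ is the least unseen prime. Every seen prime therefore debuted before $Q$ and is smaller than $Q$, so $r<Q$. This gives $a_n\le rQ<Q^2$.

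The small cases $n=3$ and $a_1,a_2$ are handled directly: $a_3=6$, and $Q_N\ge 5$ whenever $N\ge3$, so $a_3=6<25\le Q_N^2$. Taking the maximum over $n\le N$ gives $M_N<Q_N^2$.

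For the consequence, suppose some prime $p$ never appears. Then $Q_N\le p$ for every $N$, so by the bound just proved all terms are smaller than $p^2$. This is impossible for infinitely many distinct terms.

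The main subtlety is to use the \emph{current} least unseen prime $Q_N$ uniformly for all earlier selections. This works because unseen primes at time $N$ were also unseen at every earlier time, and because debut monotonicity gives $r<Q$ for every seen prime $r$.
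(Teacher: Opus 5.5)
Your proof is correct and follows the paper's argument. Both bound each $a_n$ by the unused admissible candidate $rQ_N$, where $r\in P(a_{n-1})\setminus P(a_{n-2})$ is an active predecessor prime and increasing debut order gives $r<Q_N$; both then get the consequence because infinitely many distinct terms cannot lie below a fixed bound. Your separate check of $n=3$ is harmless, though not needed, since $r=2$ already works at that selection.
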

\begin{proof}
At each earlier noninitial selection, choose an active predecessor prime
$r\in P(a_{n-1})\setminus P(a_{n-2})$. Increasing debut order gives
$r<Q_N$. The still-unseen prime $Q_N$ makes $rQ_N$ an unused admissible
candidate, so $a_n\le rQ_N<Q_N^2$. The seeds satisfy the same bound.
If $Q_N$ remained bounded, this would put infinitely many distinct selected
integers below a fixed bound. Hence $Q_N\to\infty$, and increasing debut
order shows that every prime appears.
\end{proof}

\subsection{Notation and classical prime estimates}
\label{sec:analytic}

All logarithms are natural. We write $f\ll_\theta g$, or $f=O_\theta(g)$,
when $|f|\le Cg$ with $C$ depending only on the indicated fixed parameters.
The notation $f\gg_\theta g$ reverses this comparison, $f\asymp_\theta g$
means both comparisons, and $f=o(g)$ means $f/g\to0$. For a fixed polynomial
range, $X^{o(1)}$ denotes a bound at most $X^\varepsilon$ for every fixed
$\varepsilon>0$ once $X$ is sufficiently large. We will identify the
parameters over which our error terms are uniform.

Let $\pi(x)$ count primes at most $x$, let $p_j$ be the $j$th prime, and put
\[
\vartheta(x)=\sum_{\substack{r\le x\\r\ \mathrm{prime}}}\log r.
\]
The prime number theorem and its equivalent prime-index form give
\begin{equation}
\pi(x)\sim\frac{x}{\log x},\qquad
\vartheta(x)\sim x,\qquad p_j\ll j\log(j+2).
\label{eq:prime-asymptotics}
\end{equation}
Mertens' estimate gives, for a constant $B_1$,
\begin{equation}
\sum_{\substack{r\le x\\r\ \mathrm{prime}}}\frac1r
=\log\log x+B_1+o(1).
\label{eq:mertens}
\end{equation}
See \cite[\S27.2, (27.2.3)--(27.2.4), and \S27.11, (27.11.8)]{dlmf}.
The assertion for $\vartheta$ follows from the one for $\pi$ by partial
summation.

For a positive integer $D$, write
\[
\rho(D)=\frac{\varphi(D)}{D}
=\prod_{r\mid D}\left(1-\frac1r\right),
\]
where $\varphi$ is Euler's totient function. Inclusion--exclusion gives,
uniformly for real $z>0$,
\begin{equation}
\#\{1\le u<z:(u,D)=1\}=z\rho(D)+O(2^{\omega(D)}).
\label{eq:coprime-count}
\end{equation}
Here and below $(u,D)$ denotes the greatest common divisor.

\begin{lemma}[Polynomially bounded moduli]\label{lem:polynomial-modulus}
Fix $C>0$. Uniformly for positive integers $D\le X^C$, as $X\to\infty$,
\[
2^{\omega(D)}=X^{o(1)},\qquad
\rho(D)\gg_C\frac1{\log\log X}.
\]
\end{lemma}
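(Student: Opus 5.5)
The plan is to bound both quantities using only the size of $\omega(D)$ and the prime number theorem \eqref{eq:prime-asymptotics}, with Mertens' estimate \eqref{eq:mertens} supplying the lower bound for $\rho(D)$. The key preliminary step is a uniform bound on $\omega(D)$ for $D\le X^C$. Let $k=\omega(D)$. Since $D$ is divisible by $k$ distinct primes, we have $D\ge p_1p_2\cdots p_k$. By $\vartheta(x)\sim x$,
\[
\log(p_1\cdots p_k)=\vartheta(p_k)\gg p_k\gg k\log(k+2).
\]
Hence $k\log(k+2)\ll \log D\le C\log X$. This gives $k\ll_C \log X/\log\log X$ for large $X$: if $k$ were larger, then $\log(k+2)\gg\log\log X$, and the product $k\log(k+2)$ would exceed $C\log X$ times the implied constant.

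For the first claim, I will take logarithms:
\[
\log 2^{\omega(D)}=\omega(D)\log 2\ll_C \frac{\log X}{\log\log X}.
\]
This is at most $\varepsilon\log X$ for every fixed $\varepsilon>0$ once $X$ is large, and the bound is uniform in $D\le X^C$. So $2^{\omega(D)}=X^{o(1)}$ in the sense defined in \S\ref{sec:analytic}.

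For the second claim, the function $\rho$ is multiplicative and each factor $1-1/r$ is decreasing in the size of $1/r$. Replacing the $k$ prime divisors of $D$ by the first $k$ primes can only decrease the product, because the $i$th smallest prime divisor of $D$ is at least $p_i$. Hence
\[
\rho(D)\ge\prod_{i\le k}\Bigl(1-\frac1{p_i}\Bigr)=\prod_{r\le p_k}\Bigl(1-\frac1r\Bigr).
\]
Next I use $\log(1-t)\ge -t-t^2$ for $0<t\le 1/2$ together with the convergence of $\sum 1/r^2$. Combined with \eqref{eq:mertens}, this gives
\[
\log\prod_{r\le y}\Bigl(1-\frac1r\Bigr)\ge-\log\log y-O(1),
\]
that is, $\prod_{r\le y}(1-1/r)\gg 1/\log y$ for $y\ge3$. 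From the first step and $p_j\ll j\log(j+2)$, we get $p_k\ll_C\log X$. Therefore $\log p_k\le\log\log X+O_C(1)$, and so $\rho(D)\gg_C 1/\log\log X$. Small values of $k$, including $k=0$ where $\rho(D)=1$, are trivially covered.

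The only delicate point is the uniform bound on $\omega(D)$, in particular inverting $k\log k\ll C\log X$ correctly. Everything else is a direct application of the stated prime estimates. Note also that the weaker bound $p_k\ll_C\log X$ is all the second claim needs.
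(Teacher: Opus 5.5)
Your argument is correct. The first half follows the paper's structure: bound $k=\omega(D)$ through $k\log k\ll_C\log X$, then take logarithms. The paper gets that bound elementarily from $q_j\ge j+1$, so $(k+1)!\le D\le X^C$. You instead compare with the primorial $p_1\cdots p_k=e^{\vartheta(p_k)}$, which needs the lower bound $p_k\gg k\log(k+2)$. Note that \eqref{eq:prime-asymptotics} records only the upper bound, although the lower one does follow from $\pi(x)\sim x/\log x$.

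The genuine difference is in the bound for $\rho(D)$. The paper splits the prime divisors of $D$ at $y=\log X$. Those below $y$ cost at most the full Mertens product $\prod_{r\le y}(1-1/r)\asymp 1/\log y$. Fewer than $C\log X/\log y$ prime divisors can exceed $y$, since $y^j<D\le X^C$, so their reciprocal sum is $O_C(1/\log y)$. You instead use the extremal comparison $\rho(D)\ge\prod_{i\le k}(1-1/p_i)$, which reduces everything to a single Mertens product up to $p_k$ plus a bound $p_k\ll_C\log X$. Your route is shorter and makes explicit that primorials are the worst case. The paper's split is independent of the first claim and uses only that trivial count of large prime divisors.

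You could also make your second half self-contained. The bound $\vartheta(p_k)=\log(p_1\cdots p_k)\le\log D\le C\log X$, together with $\vartheta(x)\sim x$, gives $p_k\ll_C\log X$ directly. This avoids inverting $k\log k\ll\log X$ and avoids using $p_j\ll j\log(j+2)$.
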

\begin{proof}
Let $k=\omega(D)$, and write the distinct prime divisors of $D$ as
$q_1<\cdots<q_k$. Since the $j$th smallest prime is at least $j+1$,
$q_j\ge j+1$ for every $j$. Hence
\[
(k+1)!=\prod_{j=1}^k(j+1)
\le \prod_{j=1}^k q_j
\le D
\le X^C.
\]
Taking logarithms gives
\[
C\log X\ge \log((k+1)!).
\]
For $k\ge4$, at least $\lfloor k/2\rfloor$ factors in $(k+1)!$ are at least
$k/2$, so
\[
\log((k+1)!)
\ge \left\lfloor\frac{k}{2}\right\rfloor\log\frac{k}{2}
\gg k\log k.
\]
Thus $k\log k=O_C(\log X)$. If $k\le\sqrt{\log X}$, then already
$k=O(\log X/\log\log X)$. Otherwise
$\log k\ge\tfrac12\log\log X$, and the preceding bound gives
\[
k=O_C\!\left(\frac{\log X}{\log\log X}\right).
\]
Consequently
\[
\log(2^k)=k\log2=O_C\!\left(\frac{\log X}{\log\log X}\right)
=o(\log X),
\]
so $2^k=X^{o(1)}$, proving the first assertion.

For the second, split the prime divisors of $D$ at $y=\log X$. By
\eqref{eq:mertens} and the convergent series $\sum_r r^{-2}$,
\[
\prod_{\substack{r\le y\\r\ \mathrm{prime}}}(1-1/r)
\asymp\frac1{\log y}.
\]
The product over only those primes which divide $D$ is no smaller. Above
$y$, at most $C\log X/\log y$ prime divisors are possible. Their reciprocal
sum is at most $C\log X/(y\log y)=C/\log y$, and hence their contribution
to $-\log\rho(D)$ is $O_C(1/\log y)$. Thus $\rho(D)\gg_C1/\log y$.
Substituting $y=\log X$ proves the claim.
\end{proof}

\subsection{Exact-support saturation and target terminology}
\label{sec:target-terminology}

Local admissibility depends only on prime support. Thus, for a finite set
$T$ of at least two primes, we may say that the support $T$ is locally
admissible at a selection when any, equivalently every, integer with exact
support $T$ satisfies the three support conditions there. Write
\[
\mathcal Q_T=\{m\ge1:P(m)=T\},\qquad
\rad(T)=\prod_{t\in T}t.
\]
Thus $\mathcal Q_T$ is the exact-support queue of $T$, ordered increasingly,
and $\rad(T)$ is the product of the target primes.

\begin{definition}[Saturation]\label{def:saturation}
We say that $T$ is \emph{saturated}, and write
\[
\operatorname{Sat}(T),
\]
if every member of $\mathcal Q_T$ occurs in the Enots--Wolley sequence.
\end{definition}

\begin{lemma}[Exact-support FIFO and finite loss]\label{lem:sat-finite-loss}
Let $T$ be a finite set of at least two primes.
\begin{enumerate}
\item The selected members of $\mathcal Q_T$ occur in increasing numerical
order.
\item $\operatorname{Sat}(T)$ holds if and only if exact support $T$ is
selected infinitely often.
\item If $\operatorname{Sat}(T)$ fails, then there is an index after which
$T$ is never locally admissible.
\end{enumerate}
\end{lemma}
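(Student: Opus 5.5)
The plan is to prove the three parts in order, each derived from the greedy rule together with the observation that local admissibility depends only on prime support. Lemma~\ref{lem:smaller-historical} is the main tool throughout.

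For part~1, take two selected members $m,m'$ of $\mathcal Q_T$, with $m'$ selected at index $n$ and $m$ selected at a later index. Suppose, for contradiction, that $m<m'$. At the selection of $a_n=m'$, the support $T$ is locally admissible, because $m'$ satisfies the three support conditions and these depend only on $P(m')=T$. Hence $m$, which has exact support $T$, is also locally admissible there. Since $m<a_n$, Lemma~\ref{lem:smaller-historical} says $m$ has already occurred before index $n$. This contradicts the assumption that $m$ is first selected later. Therefore the selected members of $\mathcal Q_T$ appear in increasing order.

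For part~2, the forward direction is immediate: $\mathcal Q_T$ is infinite, since it contains all $\rad(T)\,t^k$ with $t\in T$. So if every member occurs, exact support $T$ is selected infinitely often. For the converse, suppose exact support $T$ is selected infinitely often, and fix any $m\in\mathcal Q_T$. Because terms are distinct and $\mathcal Q_T\cap[1,m]$ is finite, some member $m'>m$ of $\mathcal Q_T$ is eventually selected, say at index $n$. The argument of part~1 applies: $m$ is locally admissible at index $n$ and smaller than $a_n$, so Lemma~\ref{lem:smaller-historical} shows that $m$ has already occurred. Hence every member occurs and $\operatorname{Sat}(T)$ holds.

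For part~3, assume $\operatorname{Sat}(T)$ fails and let $m$ be the least unselected member of $\mathcal Q_T$. Note that $m\in\mathcal A$ because $|T|\ge2$. Whenever $T$ is locally admissible, so is $m$. By the second assertion of Lemma~\ref{lem:smaller-historical}, there is an index after which the omitted eligible integer $m$ is never locally admissible. So after that index $T$ is never locally admissible either. Alternatively, one can argue directly: at each index where $T$ is admissible, the greedy choice is below $m$ (since $m$ itself is an unused candidate), and only finitely many integers lie below $m$.

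None of these steps looks like a real obstacle. The only point needing care is the support-invariance of admissibility, and the paper has already stated it in its definition of an admissible support.
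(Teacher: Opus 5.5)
Your proof is correct and follows essentially the same route as the paper: support-invariance of local admissibility combined with Lemma~\ref{lem:smaller-historical} gives the increasing order, the exhaustion of the queue under infinitely many selections, and the finite-loss statement via an omitted member of $\mathcal Q_T$. The only cosmetic difference is in part~2, where the paper notes that $k$ selections of support $T$ have used exactly the first $k$ members of $\mathcal Q_T$, whereas you argue element by element through a larger selected member; the two amount to the same thing.
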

\begin{proof}
Whenever $T$ is locally admissible, every unused member of $\mathcal Q_T$ is
locally admissible. Hence, if the sequence selects a member of
$\mathcal Q_T$, it must select the least unused member; this proves the first
assertion. The same least-unused property shows more: after $k$ selections of
exact support $T$, the first $k$ members of $\mathcal Q_T$ have been selected.
Therefore infinitely many such selections exhaust the queue. Conversely,
saturation plainly implies infinitely many selections of exact support $T$.
This proves the second assertion.

For the third, suppose $\operatorname{Sat}(T)$ fails and choose an omitted
$h\in\mathcal Q_T$. Lemma~\ref{lem:smaller-historical} shows that $h$ is
locally admissible at only finitely many selections. Local admissibility
depends only on prime support, so exact support $T$ itself is locally
admissible only finitely often.
\end{proof}

\begin{definition}[$\neg\operatorname{Sat}$-cutoff]\label{def:sat-cutoff}
Suppose $\operatorname{Sat}(T)$ fails. An index $n_0$ is a
\emph{$\neg\operatorname{Sat}$-cutoff for $T$} if the support $T$ is not
locally admissible at any selection with index $n>n_0$. In particular, no
later selected term has exact support $T$.
Lemma~\ref{lem:sat-finite-loss} shows that $\neg\operatorname{Sat}$-cutoffs
exist and that every larger index is again a $\neg\operatorname{Sat}$-cutoff.
\end{definition}

The main argument also uses the following classes relative to a fixed target
set $T$.

\begin{definition}[Target classes and episodes]\label{def:target-classes}
An integer $m$ is \emph{$T$-free} if $P(m)\cap T=\varnothing$,
\emph{$T$-full} if $T\subseteq P(m)$, and \emph{$T$-proper} if
\[
\varnothing\ne P(m)\cap T\ne T.
\]
It is \emph{$T$-covered} if it is full or proper. A \emph{$T$-covered
episode} is a maximal consecutive block of selected terms that are
$T$-covered; such a block may be finite or an infinite terminal tail. When
$T$ is fixed, we usually omit the prefix $T$.

For each index $N$, write
\[
F_N=\#\{i\le N:a_i\text{ is $T$-full}\},\qquad
P_N=\#\{i\le N:a_i\text{ is $T$-proper}\}.
\]
\end{definition}

\begin{definition}[Proper record]\label{def:proper-record}
A selected term $H=a_N$ is a \emph{proper record} if it is $T$-proper and
exceeds every earlier $T$-proper selected value.
\end{definition}

For example, relative to $T=\{2,3\}$, the integers $35,30,10$ are free,
full, and proper, respectively. A full integer may contain arbitrary primes
outside $T$.

\begin{lemma}[Full-return rule]\label{lem:post-sat-full-return}
Suppose $\operatorname{Sat}(T)$ fails and fix a
$\neg\operatorname{Sat}$-cutoff $n_0$ for $T$. If $B=a_n$ is $T$-free for
some $n\ge n_0$, and the following term $F=a_{n+1}$ is $T$-covered, then $F$
is $T$-full.
\end{lemma}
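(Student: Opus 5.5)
The plan is to argue by contradiction and exhibit a selection after the cutoff at which the exact support $T$ is locally admissible. That contradicts Definition~\ref{def:sat-cutoff}. The obvious first attempt is to test admissibility of $T$ at the selection of $F$ itself, index $n+1$. This fails: the predecessor there is $B$, which is $T$-free, so $T$ cannot satisfy overlap with it. I would therefore shift attention one step later, to the selection of $a_{n+2}$.

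So suppose $F=a_{n+1}$ is $T$-covered but not $T$-full, i.e.\ $T$-proper. At the selection with index $n+2$, the predecessor is $F$ and the term two places earlier is $B=a_n$. I would check the three support conditions \eqref{eq:overlap}--\eqref{eq:novelty} for the support $T$ at this selection.
\begin{itemize}
\item Overlap: since $F$ is proper, $P(F)\cap T\ne\varnothing$, so $T$ meets $P(a_{n+1})$.
\item Lag-two disjointness: since $B$ is $T$-free, $T\cap P(a_n)=\varnothing$.
\item Novelty: since $F$ is proper, $P(F)\cap T\ne T$, so $T\setminus P(a_{n+1})\ne\varnothing$.
\end{itemize}
Because $T$ has at least two primes, integers with exact support $T$ lie in $\mathcal A$. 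Hence the support $T$ is locally admissible at the selection with index $n+2$.

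Finally, $n\ge n_0$ gives $n+2>n_0$. So $T$ is locally admissible at a selection strictly after the $\neg\operatorname{Sat}$-cutoff, contradicting Definition~\ref{def:sat-cutoff}. Therefore $F$ must be $T$-full.

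The only real obstacle is noticing that the contradiction lives at index $n+2$ rather than $n+1$. Once that shift is made, each of the three conditions is a one-line consequence of the definitions of free and proper, and no earlier lemma beyond the definition of a cutoff is needed.
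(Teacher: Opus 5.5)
Your proof is correct and follows the paper's own argument exactly. You assume $F$ is proper and observe that the support $T$ is then locally admissible at the selection of $a_{n+2}$: overlap and novelty both come from $F$ being proper, and lag-two disjointness comes from $B$ being free. Since $n+2>n_0$, this contradicts the $\neg\operatorname{Sat}$-cutoff.
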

\begin{proof}
If $F$ were proper, then at the selection of $a_{n+2}$ the support $T$ would
be locally admissible: it overlaps $F$, is disjoint from $B$ because $B$ is
free, and contains a target prime absent from $F$, which supplies novelty.
Since $n+2>n_0$, this contradicts the defining property of the
$\neg\operatorname{Sat}$-cutoff.
\end{proof}

\begin{corollary}[Episode grammar after a $\neg\operatorname{Sat}$-cutoff]
\label{cor:post-sat-episode-grammar}
Suppose $\operatorname{Sat}(T)$ fails and fix a
$\neg\operatorname{Sat}$-cutoff $n_0$ for $T$. Every $T$-covered episode
beginning after $n_0$ starts full and has length at most two. Consequently
every proper term in such an episode is its second term, immediately after a
full starter.
\end{corollary}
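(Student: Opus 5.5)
The plan is to read off both claims from two local facts: the Full-return rule (Lemma~\ref{lem:post-sat-full-return}) and lag-two disjointness \eqref{eq:lagtwo}. No counting or analytic input should be needed.

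\emph{The episode starts full.} Fix a $T$-covered episode whose first term is $a_{n+1}$ with $n+1>n_0$, so $n\ge n_0$. Since $n+1\ge2$, the predecessor $a_n$ exists. By maximality of the episode, $a_n$ is not $T$-covered. An integer that is neither full nor proper has $P(a_n)\cap T=\varnothing$, so $a_n$ is $T$-free. This holds even for the seeds $1,2$, as long as they are not covered. Lemma~\ref{lem:post-sat-full-return} then applies with $B=a_n$ and $F=a_{n+1}$, and shows that $a_{n+1}$ is $T$-full.

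\emph{The episode has length at most two.} Suppose the episode contains $a_{n+2}$, and consider the next term $a_{n+3}$. Lag-two disjointness \eqref{eq:lagtwo} at index $n+3$ gives
\[
P(a_{n+3})\cap P(a_{n+1})=\varnothing.
\]
Because $a_{n+1}$ is full, $T\subseteq P(a_{n+1})$, and hence $P(a_{n+3})\cap T=\varnothing$. So $a_{n+3}$ is $T$-free and the episode ends at $a_{n+2}$ or earlier. The episode therefore cannot be an infinite tail.

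\emph{Position of proper terms.} The first term of the episode is full, and there are at most two terms. So any proper term in the episode must be the second term $a_{n+2}$, immediately after the full starter $a_{n+1}$.

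I expect no real obstacle. The only care needed is bookkeeping: an episode ``beginning after $n_0$'' must have its free predecessor at an index $\ge n_0$, so that the full-return rule applies. Also, ``not covered'' must be identified with ``free'' from the three-way trichotomy of Definition~\ref{def:target-classes}.
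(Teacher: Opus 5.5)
Your proof is correct and is the paper's argument: the full-return rule applied to the free predecessor (index at least $n_0$) makes the starter full, and lag-two disjointness against that full starter makes the term two places later $T$-free, so the episode has length at most two. Your extra bookkeeping is what the paper's short proof leaves implicit: identifying ``not covered'' with ``free'', and checking that the predecessor exists and has index at least $n_0$.
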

\begin{proof}
The starter follows a free term whose index is at least $n_0$, so it is full
by Lemma~\ref{lem:post-sat-full-return}. If the starter is $a_n$, lag-two
disjointness makes $a_{n+2}$ disjoint from every prime of $T$; hence $a_{n+2}$
is free and the episode has length at most two. The final assertion follows.
\end{proof}

\begin{definition}[Eventual prime cover]\label{def:eventual-cover}
A finite set $S$ of primes is an \emph{eventual prime cover} if there is an
index $N_S$ such that
\[
P(a_n)\cap S\ne\varnothing\qquad(n>N_S).
\]
Any such $N_S$ will be called a \emph{cover threshold} for $S$.
\end{definition}

\section{Proof outline}
\label{sec:proof-outline}

For every finite prime set $T$ with $|T|\ge2$, the exact-support queue
$\mathcal Q_T$ contains precisely the eligible integers with support $T$.
Thus surjectivity is equivalent to proving $\operatorname{Sat}(T)$ for every
such $T$. Fix a finite $T$ and suppose, toward a contradiction, that
\[
\neg\operatorname{Sat}(T).
\]
Choose a $\neg\operatorname{Sat}$-cutoff $n_0$ for $T$.

The proof has two main stages. First we show that only finitely many
$T$-covered episodes can begin after $n_0$. Then prime recurrence turns
nonsaturation into an eventual prime cover, and a final disjoint-cover
argument rules out an omitted eligible integer.

\medskip
The first stage begins with the episode grammar from
Lemma~\ref{lem:post-sat-full-return} and
Corollary~\ref{cor:post-sat-episode-grammar}. After the cutoff, every covered
episode begins with a full term and has length at most two. If infinitely many
such episodes began after $n_0$, let $m+1$ be the first one. From that point
on, every proper term is paired with a distinct preceding full starter, so
\begin{equation}
P_N\le F_N+O(1).
\label{eq:outline-episode-upper}
\end{equation}
Sections~\ref{sec:full-entry}--\ref{sec:weighted} contradict this inequality.

Section~\ref{sec:full-entry} first puts the full and proper histories on a
common numerical scale. Near-height replacement converts each later full value
into a smaller historical proper value, while quantitative recurrence controls
the size of the earlier history at a proper record. These two facts give
arbitrarily large proper records and enough historical proper values to obtain
\[
F_N\gg_T \frac{H_N}{\log\log H_N}
\]
along those records.

Section~\ref{sec:exceptions} then removes three arithmetically exceptional
classes of selected full values. Their total population is negligible compared
with $F_N$, so if $G_N$ denotes the remaining good sources, then
\begin{equation}
G_N=(1-o(1))F_N.
\label{eq:outline-good-sources}
\end{equation}

Sections~\ref{sec:exchange} and~\ref{sec:weighted} use those good sources to
force many earlier proper values. Each good source admits many local prime
exchanges whose destinations are historical proper values. Different
exchanges can have the same destination, so direct counting would overcount.
The weighted exchange multigraph $\mathcal G_N$ resolves these collisions:
Lemma~\ref{lem:outgoing} gives every source outgoing weight at least
$\alpha_T-o(1)$, while Lemma~\ref{lem:incoming} gives every proper destination
incoming weight at most $\beta_T+o(1)$. Lemma~\ref{lem:constant-margin} proves
\[
\alpha_T>\beta_T.
\]
Double counting the total edge weight and using
\eqref{eq:outline-good-sources} therefore gives, for some fixed
$\delta_T>0$ and all sufficiently large proper records,
\[
P_N\ge(1+\delta_T)F_N.
\]
This contradicts \eqref{eq:outline-episode-upper}. Hence only finitely many
$T$-covered episodes can begin after the $\neg\operatorname{Sat}$-cutoff,
which is Theorem~\ref{thm:full-entry}.

\medskip
For the second stage, prime recurrence
(Lemma~\ref{thm:prime-recurrence}) supplies infinitely many $T$-covered
terms. Since only finitely many covered episodes can begin after the cutoff,
nonsaturation can leave only finitely many $T$-free terms. Thus
Lemma~\ref{lem:nonsaturation-cover} gives
\[
\neg\operatorname{Sat}(T)
\quad\Longrightarrow\quad
T\text{ is an eventual prime cover}.
\]

Section~\ref{sec:surjectivity} finishes the proof. Lemma~\ref{lem:disjoint-covers}
shows that two disjoint finite prime sets cannot both be eventual covers.
If an eligible integer $z$ were omitted, then
$\operatorname{Sat}(P(z))$ would fail, so $P(z)$ would be an eventual cover.
Choose two primes $r,s$ outside this cover and outside the finite prefix before
a cover threshold. Then $rs$ is also omitted, so
$\operatorname{Sat}(\{r,s\})$ fails and $\{r,s\}$ becomes a second eventual
cover disjoint from $P(z)$, a contradiction. Therefore every eligible integer
occurs.

\section{Recurrence}
\label{sec:quantitative-recurrence}

We first prove that every prime divides infinitely many selected terms. We
then introduce rescaling by powers of a prime and use it to bound a finite
prefix in terms of its largest multiple of one fixed prime.

\subsection{Qualitative prime recurrence}

The following result is stated as Theorem~6 in \cite{oeis-ew}.

\begin{lemma}[Prime recurrence]\label{thm:prime-recurrence}
Every prime divides infinitely many terms of the Enots--Wolley sequence.
\end{lemma}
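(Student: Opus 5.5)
We argue by contradiction: fix a prime $p$ and suppose there is an index $N_0$ after which no term is divisible by $p$. By Lemma~\ref{lem:unseen-ceiling}, $p$ has already appeared, and every other prime eventually appears as well. The plan is to exhibit, at some late selection, a small locally admissible candidate divisible by $p$. We then show that the greedy rule would have to select it, or else select something still smaller and divisible by $p$, contradicting the assumption.

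The natural candidates have the form $pQ$ or $rp$. Consider a noninitial selection index $n>N_0+2$. Choose the active prime $r\in P(a_{n-1})\setminus P(a_{n-2})$. Then $p\nmid a_{n-1}a_{n-2}$, since these terms are late. Any integer $rpk$ with $k$ coprime to $a_{n-2}$ and $p\nmid a_{n-1}$ is locally admissible at index $n$: overlap comes from $r$, novelty from $p$, and lag-two disjointness holds because $r\nmid a_{n-2}$.

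So it suffices to find infinitely many $n$ with some unused such integer below $a_n$. The cleanest choice is the family $rp^j$, $j\ge1$. These integers have exact support $\{r,p\}$, and only finitely many of them are ever used after $N_0$, namely none; the ones used before $N_0$ form a finite set. Since $p^j$ grows, take $j$ least with $rp^j$ unused. Used values all occur before $N_0$ and are bounded by $M_{N_0}$, so $rp^{j}\le r\,p\,M_{N_0}$. Lemma~\ref{lem:smaller-historical} then forces $a_n\le rp\,M_{N_0}$ whenever index $n$ has active prime $r$. Equivalently, we may use the unused integer $rp^{J}$ with $p^{J}>M_{N_0}$, so $a_n<r p^{J}$ with $J$ fixed. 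Thus $a_n\le C r$ for every late $n$, where $C=p^{J}$ is fixed and $r$ is the active prime of $a_{n-1}$.

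The main obstacle is turning the bound $a_n\le Cr_n$ into a contradiction. Here $r_n$ is the active prime of $a_{n-1}$, and $a_{n-1}\ge r_n$; note also that $a_n$ must contain a prime $r_{n+1}$ new relative to $a_{n-1}$. My first attempt is a growth argument. Consider a new prime $Q$ debuting at a late index $m$. By Lemma~\ref{lem:debut-pair}, $a_m=bQ$, and the active prime at $m+1$ is $Q$. At index $m+2$ the active prime $s$ satisfies $s\mid a_{m+1}\le CQ$. I would follow the chain of active primes $r_n$ and show they cannot stay large. Distinct terms $a_n\le Cr_n$ each have at least two prime factors, and each is divisible by $r_n$. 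Hence $a_n=r_n k_n$ with $2\le k_n\le C$, and $k_n$ contains another prime that must serve as overlap or novelty.

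Novelty requires $P(a_n)\not\subseteq P(a_{n-1})$. Since $k_n\le C$, either the new prime divides $k_n$, and is therefore a prime below $C$, or the new prime is $r_n$ itself. The latter is impossible, because $r_n\mid a_{n-1}$. So every late term introduces a new prime smaller than $C$, and that prime becomes the next active prime $r_{n+1}<C$. Then $a_{n+1}\le Cr_{n+1}<C^2$ for all large $n$. This yields infinitely many distinct terms below $C^2$, which is the desired contradiction. I expect the delicate point to be the bookkeeping that justifies the unused candidate $r_n p^{J}$. We need the active prime $r_n$ to avoid $a_{n-2}$, which holds by definition, and we need $p\nmid a_{n-1}a_{n-2}$, which holds once $n>N_0+2$.
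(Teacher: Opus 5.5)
Your argument is correct once one choice is made precisely, and it takes a different and shorter route than the paper. As written, ``each $a_n$ is divisible by $r_n$'' fails when $a_{n-1}$ has several active primes and $r_n$ is an arbitrary one of them. The fix is easy. Your bound $a_n<rp^J$ holds for \emph{every} $r\in P(a_{n-1})\setminus P(a_{n-2})$, and by lag-two disjointness any $r_n\in P(a_n)\cap P(a_{n-1})$ is such a prime. With that choice, $a_n/r_n<p^J$. The novelty prime $s\in P(a_n)\setminus P(a_{n-1})$ differs from $r_n$, so it divides $a_n/r_n$ and $s<p^J$. Since $s$ is an active prime of $a_n$, the same bound at index $n+1$ gives $a_{n+1}<sp^J<p^{2J}$ for every late $n$. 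The debut detour and the chain bookkeeping can therefore be dropped.

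The paper instead works only at late debuts $A\to bQ\to W\to C$. The unused candidate $pQ$ forces $W=Qv$ with $1<v<p$, which is already impossible for $p=2$. The candidate $p^Jv$ then forces $C<p^{J+1}$. Infinitely many debuts (Lemmas~\ref{lem:unseen-ceiling} and~\ref{lem:debut-pair}) supply infinitely many distinct such $C$, giving the contradiction.

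What each route buys: your version needs no debut structure, no infinitude of debuts, and no separate case $p=2$, and it bounds every late term rather than a sparse subsequence. The paper's debut-localized version is the template reused in Lemma~\ref{lem:quantitative-recurrence}. There the same four-term window counts large debuts in a finite prefix and so bounds the frontier $Q_N$, using upward rescaling in place of $p^J$ and an extra case when $p\mid A$.
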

\begin{proof}
Suppose a prime $p$ divides only finitely many selected terms. Let
\[
N_p=\max\{j:p\mid a_j\},
\]
and choose a fixed integer $J\ge1$ such that $p^J$ exceeds every selected
multiple of $p$. By Lemma~\ref{lem:unseen-ceiling}, infinitely many primes
debut. Choose a debut $Q>p$ at an index $n>N_p+1$. Then all four terms
\[
A=a_{n-1}\longrightarrow a_n=bQ\longrightarrow
W=a_{n+1}\longrightarrow C=a_{n+2}
\]
are not divisible by $p$. Here $b$ is prime and $b\mid A$, by
Lemma~\ref{lem:debut-pair}.

At the selection of $W$, the integer $pQ$ is unused: the only earlier term
containing $Q$ is $bQ$, with $b\ne p$. It is locally admissible because
$Q$ supplies overlap, $p$ supplies novelty, and both avoid $A$. Since $W$
is not divisible by $p$, greediness gives $W<pQ$. Also, $b\mid A$ excludes
$b$ from $W$, so overlap forces $Q\mid W$. Thus
\[
W=Qv,\qquad 1<v<p<Q.
\]
For $p=2$ this is already impossible. For $p>2$, the integer
$m=p^Jv$ has never been selected: it is a multiple of $p$ exceeding every
selected multiple of $p$. At $C$, it overlaps $W$ through $v$, introduces
$p$, and avoids $bQ$, since $b\nmid v$ and $v<Q$. Thus $m$ is an admissible
candidate at $C$. The relevant prime incidences, with $u$ any prime divisor
of $v$, are
\[
\begin{array}{c|cccc|c}
\text{prime}&A&bQ&W=Qv&C&\text{candidate }m=p^Jv\\ \hline
p&0&0&0&0&1\\
b&1&1&0&0&0\\
Q&0&1&1&0&0\\
u&0&0&1&*&1
\end{array}
\]
where $*$ is unrestricted. Greediness gives
\[
C\le p^Jv<p^{J+1}.
\]
The bound is fixed, whereas infinitely many debut indices give distinct
selected terms $C=a_{n+2}$. This is impossible.
\end{proof}

\subsection{Near-height rescaling}
\label{sec:near-height-rescaling}

For a fixed cofactor $u$, multiplication by powers of $p$ gives the chain
\[
u,\;pu,\;p^2u,\ldots.
\]
We will use the members immediately below and above a specified height.

\begin{definition}[Near-height $p$-rescaling]\label{def:near-height-rescaling}
Let $p$ be a prime, let $u\ge1$ be an integer with $p\nmid u$, and let
$X\ge u$ be real. Define the upward rescaling by
\[
R_p^+(X;u)=\min\{p^ju:j\ge0,\ p^ju>X\}.
\]
If $X>u$, also define the downward rescaling by
\[
R_p^-(X;u)=\max\{p^ju:j\ge0,\ p^ju<X\}.
\]
In both definitions $j$ ranges over integers.
\end{definition}

\begin{lemma}[Rescaling bounds]\label{lem:rescaling-bounds}
For $X>u$, the rescalings satisfy
\begin{equation}
\frac Xp\le R_p^-(X;u)<X<R_p^+(X;u)\le pX.
\label{eq:rescaling-bounds}
\end{equation}
The bounds on $R_p^+(X;u)$ also hold when $X=u$.
The first inequality is strict unless $X/u$ is an integral power of $p$.
The support of $R_p^+(X;u)$ is $P(u)\cup\{p\}$. The same holds for
$R_p^-(X;u)$ when $X>pu$.
\end{lemma}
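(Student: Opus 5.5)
The plan is to work directly from Definition~\ref{def:near-height-rescaling}, treating the set $\{p^ju:j\ge0\}$ as a geometric progression with ratio $p$ that starts at $u\le X$. Throughout, let $k\ge0$ be the largest integer with $p^ku\le X$. It exists because $u\le X$ and $p^ju\to\infty$.

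For the upward rescaling, the least member exceeding $X$ is $p^{k+1}u$, so $R_p^+(X;u)=p^{k+1}u$. It exceeds $X$ by the choice of $k$. Also $p^{k+1}u=p\cdot p^ku\le pX$. Only $u\le X$ is used here, so these bounds also hold when $X=u$.

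For the downward rescaling, assume $X>u$. I split according to whether $p^ku<X$ or $p^ku=X$.
\begin{itemize}
\item If $p^ku<X$, then $R_p^-(X;u)=p^ku$, since $p^{k+1}u>X$. Maximality of $k$ gives $p^ku=p^{k+1}u/p>X/p$, so the first inequality is strict.
\item If $p^ku=X$, then $k\ge1$ because $X>u$. The largest member strictly below $X$ is $p^{k-1}u=X/p$, which gives equality in the first inequality.
\end{itemize}
In both cases $R_p^-(X;u)<X$ by definition. The equality case arises exactly when $X=p^ku$, that is, when $X/u$ is an integral power of $p$. This proves the strictness claim.

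For the supports, any $p^ju$ with $j\ge1$ has support $P(u)\cup\{p\}$, because $p\nmid u$ makes the two contributions disjoint. The upward rescaling has exponent $k+1\ge1$, so its support is always $P(u)\cup\{p\}$. For the downward rescaling, the condition $X>pu$ gives $R_p^-(X;u)\ge X/p>u$, so its exponent is at least $1$ and the same support statement follows.

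No step is a real obstacle. The only point needing care is the boundary case where $X$ is itself a member of the chain: there the downward rescaling drops by a full factor of $p$, and the first inequality of \eqref{eq:rescaling-bounds} becomes an equality.
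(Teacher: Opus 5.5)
Your proof is correct and follows essentially the same route as the paper. Both arguments read the bounds off minimality and maximality in the chain $p^ju$, identify the equality case as $X$ lying on that chain, and get the supports from positivity of the exponent; your explicit index $k$ and two-case split for $R_p^-$ just spell out what the paper compresses into one maximality step.
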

\begin{proof}
If $R_p^+(X;u)=p^ju$, then $j\ge1$ because $X\ge u$.
Minimality gives $p^{j-1}u\le X$, hence $p^ju\le pX$.
If $R_p^-(X;u)=p^iu$, maximality gives $p^{i+1}u\ge X$, hence
$p^iu\ge X/p$. Equality holds exactly when $X=p^{i+1}u$.
If $X>pu$, then $pu$ itself lies below $X$, so $i\ge1$.
The support identities follow because the corresponding exponents are positive.
\end{proof}

For example, $R_2^-(20;3)=12$ and $R_2^+(20;3)=24$. At height $24$,
the downward value is still $12$, so equality at the left endpoint of
\eqref{eq:rescaling-bounds} can occur. The rescalings are purely arithmetic:
local admissibility and whether the chosen integer has already occurred must
be checked separately in each application.

\subsection{Quantitative recurrence}

We now bound the entire prefix from the largest historical multiple of one
fixed prime. Unlike qualitative recurrence, this estimate applies to each
finite prefix and controls both its unseen-prime frontier and its largest
selected term.

\begin{lemma}[Numerical history from one prime maximum]
\label{lem:quantitative-recurrence}
Fix $N\ge2$ and a prime $p$ which appears among $a_1,\ldots,a_N$. Let $Q_N$
be the least prime which divides none of $a_1,\ldots,a_N$, and set
\[
M_N=\max_{i\le N}a_i,
\qquad
U_{p,N}=\max\{a_i:i\le N,\ p\mid a_i\}.
\]
Then
\begin{align}
\pi(Q_N)&\le\pi(U_{p,N})+pU_{p,N}+3,\label{eq:frontier-rank}\\
Q_N&\ll_p U_{p,N}\log(U_{p,N}+2),\label{eq:frontier-from-U}\\
M_N&\ll_p U_{p,N}^2\log^2(U_{p,N}+2).\label{eq:history-from-U}
\end{align}
\end{lemma}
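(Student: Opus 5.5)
The plan is to bound the number of primes that have debuted by time $N$. Every prime below $Q_N$ has debuted among $a_1,\ldots,a_N$, because debuts occur in increasing order (Lemma~\ref{lem:debut-pair}). So $\pi(Q_N)-1$ is at most the number of debuts up to index $N$. Write $U=U_{p,N}$. The debuted primes $Q\le U$ contribute at most $\pi(U)$. For debuted primes $Q>U$ we mimic the proof of Lemma~\ref{thm:prime-recurrence}: attach to each such debut a distinct selected term of size at most $pU$. That gives at most $pU$ of them. The debuts at the last two indices $N-1,N$, whose local window leaves the prefix, cost at most $2$, and the $+1$ from $Q_N$ itself gives the constant $3$ in \eqref{eq:frontier-rank}.

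\textbf{Key step.} Let $Q>U$ debut at index $n\le N-2$, so $a_n=bQ$ with $b\mid A=a_{n-1}$. Neither $a_n$ nor any term divisible by $Q$ can be a multiple of $p$ inside the prefix, since such a term would be $\ge Q>U$. In particular $b\ne p$, and we split on whether $p\mid A$.
\begin{itemize}
\item If $p\mid A$, attach the index $n-1$; its value satisfies $A\le U$.
\item If $p\nmid A$, then $pQ$ is an unused admissible candidate at the selection of $W=a_{n+1}$. We also have $p\nmid W$: otherwise $W$ would be a multiple of $p$ divisible by $Q$. So greediness gives $W=Qv$ with $1<v<p$, exactly as in that proof. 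At $C=a_{n+2}$, consider the upward rescaling $m=R_p^+(U;v)$. Since $U\ge p>v$, Lemma~\ref{lem:rescaling-bounds} gives $m\le pU$, and $m$ is a multiple of $p$ exceeding $U$, hence unused at that time. It is admissible, by the same incidence table as in Lemma~\ref{thm:prime-recurrence}. Hence $C\le pU$, and we attach the index $n+2$.
\end{itemize}

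\textbf{Main obstacle.} The hard part is making this attachment injective so that the count is $pU$ rather than $2pU$. An attached index $n-1$ for one debut might coincide with an attached index $n'+2$ for another. My first attempt would use the gap structure: $Q$ appears in $a_n$ and $W$, and debuts are separated in index. The coincidence $n-1=n'+2$ forces two debuts three apart, with $A=C'$. I would check whether $p\mid C'$ then contradicts $C'\le R_p^+(U;v')$, or, failing that, use the fact that the first case forces $A\le U$ while the second gives $C$ in $(0,pU]$, and separate the two by parity or size. If exact injectivity fails, I would settle for a bound of the form $O(pU)$. That still suffices for \eqref{eq:frontier-from-U} and \eqref{eq:history-from-U}, although it is weaker than the stated constant.

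\textbf{Remaining bounds.} Given \eqref{eq:frontier-rank}, we have $\pi(Q_N)\ll_p U$. The prime-index form of \eqref{eq:prime-asymptotics}, $p_j\ll j\log(j+2)$, then yields \eqref{eq:frontier-from-U}. Finally, Lemma~\ref{lem:unseen-ceiling} gives $M_N<Q_N^2$, and squaring \eqref{eq:frontier-from-U} gives \eqref{eq:history-from-U}.
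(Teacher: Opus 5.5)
Your $p\nmid A$ case and your final deductions are correct, so your argument proves \eqref{eq:frontier-from-U} and \eqref{eq:history-from-U}. It does not prove \eqref{eq:frontier-rank} as stated. You attach the index $n-1$ when $p\mid A$ and the index $n+2$ when $p\nmid A$, and the combined attachment need not be injective. The checks you suggest do not repair this. A collision between a debut $n$ of the first kind and a debut $n'=n-3$ of the second kind only means that $a_{n-1}=a_{n'+2}$ is a multiple of $p$ at most $U_{p,N}$. That is consistent with $C'\le R_p^+(U_{p,N};v')$, and size cannot separate the two kinds because $(0,U_{p,N}]\subset(0,pU_{p,N}]$. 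Counting the two kinds separately gives only $\pi(Q_N)\le\pi(U_{p,N})+pU_{p,N}+\lfloor U_{p,N}/p\rfloor+3$, which, as you acknowledge, is weaker than the lemma.

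The missing idea is to attach $C=a_{n+2}$ in the case $p\mid A$ as well. That requires bounding the cofactor $v$ in $W=Qv$ there, and the paper does this using $A-1$:
\begin{itemize}
\item Here $A\le U_{p,N}$, and $A\ne2$, since otherwise $b=p=2$, contradicting $p\nmid bQ$. So $A-1$ has a prime divisor $r$.
\item This $r$ satisfies $r<A\le U_{p,N}<Q$ and $r\nmid A$, hence $r\ne b$.
\item Therefore $Qr$ is unused and admissible at $W$: $Q$ gives overlap, $r$ gives novelty, and both avoid $A$. Thus $W\le Qr$, so $1<v\le U_{p,N}$.
\item You already have $p\nmid W$ and $b\nmid W$, and $v<Q$. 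So a prime $u\mid v$ differs from $p,b,Q$.
\item Then $R_p^+(U_{p,N};u)$ is an unused admissible candidate at $C$, exactly as in your $p\nmid A$ case.
\end{itemize}
Hence $C\le pU_{p,N}$ in both cases, and the single injective map $n\mapsto a_{n+2}$ gives the exact term $pU_{p,N}$.
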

\begin{proof}
Throughout the proof, the prefix means $a_1,\ldots,a_N$. We split prime
debuts according to whether $Q\le U_{p,N}$ or $Q>U_{p,N}$. The
former contribute at most $\pi(U_{p,N})$ debuts and will be counted at the
end. Notice in particular that any debut where $p$ provides overlap belongs
to this first class: if its debut term is $pQ$, then
$pQ\le U_{p,N}$ by the definition of $U_{p,N}$, and hence
$Q<U_{p,N}$.

Consider therefore a prime $Q>U_{p,N}$ debuting at an index $n\le N-2$,
so its four-term neighborhood is contained in the prefix. By
Lemma~\ref{lem:debut-pair}, write these four consecutive terms as
\[
A=a_{n-1}\longrightarrow a_n=bQ\longrightarrow
W=a_{n+1}\longrightarrow C=a_{n+2}.
\]
We will prove that the term two steps after the debut satisfies
$C\le pU_{p,N}$. Thus each such completed large debut can be mapped to
the selected value $C=a_{n+2}$; Step~4 will show that this map is injective.

\medskip\noindent\textbf{Step 1: $W$ retains $Q$ and is $p$-free.}
The debut term $bQ$ is $p$-free. Indeed, $Q>U_{p,N}\ge p$, so $Q\ne p$;
if $p\mid bQ$, then $b=p$, and the selected $p$-multiple
$pQ>U_{p,N}$ would contradict the definition of $U_{p,N}$. The
carrier $b$ divides $A$, so lag-two disjointness excludes $b$ from $W$.
Since $W$ must overlap $bQ$, we have $Q\mid W$. If $p\mid W$, then
$W\ge pQ>U_{p,N}$, again contradicting the definition of
$U_{p,N}$. Thus $p\nmid W$. Write $W=Qv$. Every noninitial term has at
least two distinct prime divisors, so $v>1$.

The incidences forced so far are
\[
\begin{array}{c|cccc}
\text{prime}
  &A=a_{n-1}& bQ &W=Qv&C\\ \hline
p& *&0&0&*\\
b&1&1&0&0\\
Q&0&1&1&0
\end{array}
\]
where $*$ is unrestricted. The zeros for $b$ and $Q$ in the $C$ column come
from lag-two disjointness with $a_n=bQ$.

\medskip\noindent\textbf{Step 2: the cofactor $v$ of $Q$ in $W=Qv$ satisfies $v\le U_{p,N}$.}
We treat separately the cases $p\nmid A$ and $p\mid A$.
If $p\nmid A$, the integer $pQ$ is unused: the only earlier occurrence of
$Q$ is in $bQ$, and $b\ne p$. It is admissible at $W$, since $Q$ supplies
overlap, $p$ supplies novelty, and both avoid $A$. Equality would put $p$
in $W$, so
\[
W<pQ,\qquad 1<v<p\le U_{p,N}.
\]
If $p\mid A$, then $A\le U_{p,N}$. The case $A=2$ would give $b=p=2$,
contrary to $p\nmid bQ$. Thus $A-1>1$; choose a prime $r\mid A-1$.
It satisfies $r\le U_{p,N}<Q$ and divides neither $A$ nor $bQ$. The
integer $Qr$ is unused, since $Q$ has appeared only in $bQ$ and $r\ne b$.
At $W$ it retains $Q$, introduces $r$, and avoids $A$. Hence
\[
W\le Qr,\qquad 1<v\le r\le U_{p,N}.
\]
Both cases give $1<v\le U_{p,N}$.

\medskip\noindent\textbf{Step 3: a bounded unused $p$-multiple is admissible at $C$.}
Choose a prime divisor $u$ of $v$. It is different from $p,b,Q$: the first
is absent from $W$, the second divides $A$ and so is excluded from $W$,
and the third exceeds $v$. The relevant incidences are
\[
\begin{array}{c|ccc|c}
\text{prime}&A&bQ&W=Qv&\text{candidate at }C\\ \hline
p& *&0&0&1\\
u&0&0&1&1\\
Q&0&1&1&0\\
b&1&1&0&0
\end{array}
\]
where $*$ is unrestricted. Apply upward rescaling at height $U_{p,N}$:
\[
m=R_p^+(U_{p,N};u).
\]
Since $u\le U_{p,N}$ and $u\ne p$,
Lemma~\ref{lem:rescaling-bounds} gives
\[
U_{p,N}<m\le pU_{p,N},\qquad P(m)=\{p,u\}.
\]
This integer is unused by the definition of $U_{p,N}$. It retains $u$
from $W$, introduces $p$, and is disjoint from $bQ$, as required by lag-two
disjointness at the selection of $C$. Greediness gives
$C\le m\le pU_{p,N}$.

\medskip\noindent\textbf{Step 4: completed debuts inject into bounded selected values.}
If the debut occurs at index $n$, its assigned value is $C=a_{n+2}$.
The map
\[
n\longmapsto a_{n+2}
\]
is injective: distinct debut indices give distinct indices $n+2$, and the
sequence never repeats a value. The four-term neighborhoods need not be disjoint.
Since every assigned value is at most $pU_{p,N}$, there are at most
$pU_{p,N}$ completed debuts with $Q>U_{p,N}$.

We now return to the case $Q\le U_{p,N}$ set aside at the start of the
proof. There are at most $\pi(U_{p,N})$ such debut primes. For the
remaining case $Q>U_{p,N}$, Step~4 counts every debut at an index
$n\le N-2$, giving at most $pU_{p,N}$ of them. At most two further
$Q>U_{p,N}$ debuts can occur at indices $N-1$ and $N$, since only those
debuts lack two following selections inside the prefix.

By Lemma~\ref{lem:debut-pair}, prime debuts occur in increasing prime order.
Since $Q_N$ is the least prime that has not debuted by time $N$, the primes
that have debuted by then are exactly the primes below $Q_N$. Finally, $Q_N$
itself has not yet debuted, so including it contributes one further prime.
Therefore
\[
\pi(Q_N)\le \pi(U_{p,N})+pU_{p,N}+2+1
=\pi(U_{p,N})+pU_{p,N}+3,
\]
which is \eqref{eq:frontier-rank}. The prime-index estimate
\eqref{eq:prime-asymptotics} gives \eqref{eq:frontier-from-U};
Lemma~\ref{lem:unseen-ceiling} then gives \eqref{eq:history-from-U}.
\end{proof}

\section{Episodes after a $\neg\operatorname{Sat}$-cutoff and proper records}
\label{sec:full-entry}

This section supplies the numerical control needed later for the prime-exchange
argument. We first show that a full term with a free or full predecessor has a
smaller historical proper replacement within a factor of the smallest target
prime. Under the explicit hypothesis of infinitely many covered episodes after
a $\neg\operatorname{Sat}$-cutoff, this yields arbitrarily large proper records
and a common upper bound for all earlier full values. We then combine
quantitative recurrence with a coprime-candidate count to control the whole
historical prefix and obtain a lower bound for the selected full population.

\subsection{Proper replacements after a $\neg\operatorname{Sat}$-cutoff}

The first lemma is purely local.

\begin{lemma}[Near-height proper replacement]
\label{lem:protected-replacement}
Let $T$ be a finite set of at least two primes, let $p=\min T$, and let
$A\to B\to f$ be three consecutive selected terms, where $f$ is $T$-full
and $B$ is either $T$-free or $T$-full. Then there is an earlier $T$-proper
value $w$ satisfying
\[
f/p<w<f.
\]
\end{lemma}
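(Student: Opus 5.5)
The plan is to produce $w$ by deleting one non-minimal target prime from $f$ and then climbing back up with powers of $p$. Local admissibility depends only on prime support, so it suffices to build a $T$-proper integer $w<f$ that is locally admissible at the selection of $f$. Lemma~\ref{lem:smaller-historical} then says $w$ was already selected before $f$.

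\textbf{Construction.} Since $|T|\ge2$, fix a prime $t\in T$ with $t\ne p$, and write $f=t^k u$ with $k\ge1$ and $t\nmid u$. Because $f$ is $T$-full, $p\mid u$, so every integer $p^ju$ has the same support
\[
P(p^ju)=P(u)=P(f)\setminus\{t\}.
\]
Any such integer contains $p$ and misses $t$, so it is $T$-proper. Since $u<f$, I take
\[
w=R_p^-(f;u).
\]
By Lemma~\ref{lem:rescaling-bounds}, $f/p\le w<f$. The left inequality is strict because $f/u=t^k$ is not a power of $p$. This gives $f/p<w<f$. The support identity holds here even when $f\le pu$, because $p$ already divides $u$; so I do not need the hypothesis $X>pu$ from that lemma.

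\textbf{Admissibility at the selection of $f$.} I check the three conditions in turn.

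\emph{Lag-two disjointness.} Since $P(w)\subseteq P(f)$ and $P(f)\cap P(A)=\varnothing$, the integer $w$ avoids $A$.

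\emph{Overlap and novelty.} I split according to the type of $B$.
If $B$ is $T$-full, then $p\in P(w)\cap P(B)$ gives overlap. For novelty, $f$ has a prime $r\notin P(B)$. Since $B\supseteq T$, we have $r\notin T$, so $r\ne t$ and $r\in P(w)$.
If $B$ is $T$-free, then $p\notin P(B)$, so $p$ supplies novelty. For overlap, $f$ shares some prime $s$ with $B$. Since $B$ is free, $s\notin T$, so $s\ne t$ and $s\in P(w)$.

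The only delicate point is choosing the deleted prime so that overlap and novelty survive. Deleting a target prime $t\ne p$, rather than $p$ itself, is what works. The primes that witness overlap and novelty are either $p$ or lie outside $T$, so they are never the deleted $t$. Deleting $p$ would instead force rescaling by a larger prime, which breaks the bound $w>f/p$.
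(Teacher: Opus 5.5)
Your proof is correct. It uses the same mechanism as the paper: downward $p$-rescaling into $(f/p,f)$, followed by Lemma~\ref{lem:smaller-historical}. The difference is the choice of cofactor.

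The paper first picks a single protected outside prime $h$. When $B$ is free this is an overlap witness in $P(B)\cap P(f)$; when $B$ is full it is a novelty witness in $P(f)\setminus P(B)$. It then sets $w=R_p^-(f;h)$, so $P(w)=\{p,h\}$. It must then check that $f/h$ contains both $p$ and $q$. This gives $f>ph$, needed for the support identity, and shows $f/h$ is not a power of $p$, needed for strictness.

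You instead delete a whole non-minimal target prime $t$ and rescale $u=f/t^k$, so $P(w)=P(f)\setminus\{t\}$. Preservation of the witnesses is then automatic, since each witness is $p$ or lies outside $T$ and so cannot be $t$. Strictness is immediate because $f/u=t^k$. The cost is a $w$ with larger and less explicit support.

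The paper's choice matters later. The same prime $h$ reappears as the protected outside prime $h_f$ of Definition~\ref{def:source-protected-prime}, and it must lie in the cofactor $d$ of every prime exchange in Section~\ref{sec:exchange}. Isolating it in this lemma sets up that construction; your route proves the lemma itself more directly.

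One small technical point: Definition~\ref{def:near-height-rescaling} assumes $p\nmid u$, but your $u$ is divisible by $p$. The inequalities of Lemma~\ref{lem:rescaling-bounds}, including the equality criterion $X=p^{i+1}u$, go through verbatim without that hypothesis. You should say so explicitly, or simply define $w$ directly as the largest $p^ju$ below $f$.
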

\begin{proof}
Choose a prime $h$ as follows:
\[
\begin{array}{c|c|c}
\text{predecessor}&\text{choice of }h&\text{role to preserve}\\ \hline
B\text{ free}&h\in P(B)\cap P(f)&\text{overlap}\\
B\text{ full}&h\in P(f)\setminus P(B)&\text{novelty}.
\end{array}
\]
Overlap supplies $h$ in the first case and novelty supplies it in the second.
In either case $h\notin T$, and $h\nmid A$ because $h\mid f$.
We call such an $h$ a \emph{protected outside prime} for this selection.

Set
\[
w=R_p^-(f;h).
\]
If $q$ is the second-smallest prime of $T$, then the quotient $f/h$ contains
both $p$ and $q$, so $f>ph$ and $f/h$ is not a power of $p$.
Lemma~\ref{lem:rescaling-bounds} therefore gives
\[
f/p<w<f,\qquad P(w)=\{p,h\}.
\]
Thus $w$ is proper and avoids $A$. If $B$ is free, $h$ supplies overlap
and $p$ supplies novelty; if $B$ is full, $p$ supplies overlap and $h$
supplies novelty. The smaller admissible integer $w$ occurred before $f$
by Lemma~\ref{lem:smaller-historical}.
\end{proof}

\begin{corollary}[Full envelope and proper-record supply]
\label{cor:full-envelope}
Let $T$ be a finite set of at least two primes and let $p=\min T$. Suppose
$\operatorname{Sat}(T)$ fails, let $n_0$ be a
$\neg\operatorname{Sat}$-cutoff for $T$, suppose infinitely many $T$-covered
episodes begin after $n_0$, and let $m+1$ be the first such episode start. Let
$H_N$ be the largest proper value among the first $N$ terms, once such a value
exists. Then every full value $f$ selected after $m$ and by $N$ satisfies
\begin{equation}
f<pH_N.
\label{eq:full-envelope}
\end{equation}
Moreover, $H_N$ is unbounded, so there are arbitrarily large proper-record
indices $N>m$.
\end{corollary}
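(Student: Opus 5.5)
The plan is to apply Lemma~\ref{lem:protected-replacement} to every full value selected after $m$, check that its predecessor is free or full, and then read off both assertions.

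\textbf{The envelope.} Let $f=a_n$ be full with $m<n\le N$. By Corollary~\ref{cor:post-sat-episode-grammar}, every covered episode beginning after $n_0$ starts full, has length at most two, and has any proper term in second position. The episode containing $a_n$ begins at some index $\ge m+1>n_0$, since $m+1$ is the first episode start after $n_0$ and $n>m$. We then split into two cases.
\begin{itemize}
\item If $f$ is the starter, its predecessor $a_{n-1}$ is free.
\item If $f$ is the second term of its episode, its predecessor is the full starter.
\end{itemize}
In both cases $B=a_{n-1}$ is free or full. The index $n-2\ge1$ exists because $n\ge m+2\ge 3$ (we may take $n_0\ge2$). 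Lemma~\ref{lem:protected-replacement} therefore gives an earlier proper value $w$ with $f/p<w<f$. Since $w$ occurred before index $n\le N$, we have $w\le H_N$, and hence $f<pw\le pH_N$. In particular $H_N$ exists as soon as one full value has been selected after $m$, which happens at the episode start $m+1$ itself.

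\textbf{Unboundedness of $H_N$.} Infinitely many covered episodes begin after $n_0$, and each starts with a full term at a distinct index. The full values selected after $m$ are distinct integers, so they are unbounded. The envelope $f<pH_N$ then forces $H_N\to\infty$. Because $H_N$ is nondecreasing, takes only values that are selected proper terms, and is unbounded, it increases infinitely often. Each index at which it strictly increases is a proper-record index in the sense of Definition~\ref{def:proper-record}, so there are arbitrarily large proper-record indices $N>m$.

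\textbf{Main obstacle.} The only delicate step is the case analysis ensuring that every full term after $m$ has a free or full predecessor, i.e.\ that no full term follows a proper term. This is exactly what the episode grammar supplies, once we use that all relevant episodes begin after the cutoff.
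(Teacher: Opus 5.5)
Your proof is correct and follows the paper's argument. The episode grammar gives every full term after $m$ a free or full predecessor, Lemma~\ref{lem:protected-replacement} then yields $f<pw\le pH_N$, and the unbounded distinct full starters force $H_N$ to be unbounded, with each new maximum at a proper-record index. One small slip: $n>m$ gives only $n\ge m+1$, not $n\ge m+2$, but $a_{n-2}$ exists anyway because $a_1=1$ and $a_2=2$ are not $T$-full, so no assumption on $n_0$ is needed.
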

\begin{proof}
By Corollary~\ref{cor:post-sat-episode-grammar}, every full selection after
$m$ has predecessor either free or full. Lemma~\ref{lem:protected-replacement}
therefore supplies an earlier proper value $w$ with $f/p<w<f$. Hence
$w\le H_N$, proving \eqref{eq:full-envelope}.

The infinitely many covered episodes have infinitely many distinct full
starters. Those starters are unbounded, so their proper replacements are
unbounded as well. Thus $H_N$ is unbounded, and every new maximum of $H_N$
occurs at a proper-record index.
\end{proof}

\subsection{Specializing quantitative recurrence}

The near-height replacement can now be combined with the general estimate of
Lemma~\ref{lem:quantitative-recurrence} to control the entire prefix from a
proper record.

\begin{corollary}[Quantitative recurrence at a proper record]\label{lem:record-history}
Let $T$ be a finite set of at least two primes and let $p=\min T$. Suppose
$\operatorname{Sat}(T)$ fails, let $n_0$ be a
$\neg\operatorname{Sat}$-cutoff for $T$, suppose infinitely many $T$-covered
episodes begin after $n_0$, and let $m+1$ be the first such episode start.
Then at every sufficiently large proper-record index $N>m$, where $H_N=a_N$,
\begin{align}
Q_N&\ll_T H_N\log H_N,\label{eq:record-frontier-bound}\\
M_N&\ll_T H_N^2\log^2 H_N.\label{eq:record-history-bound}
\end{align}
In particular, there is a fixed exponent $C_T$ such that every term in the
entire prefix, and hence every historical term entering a lag-two
disjointness constraint at an earlier full selection, is at most
$H_N^{C_T}$.
\end{corollary}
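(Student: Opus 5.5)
The plan is to apply Lemma~\ref{lem:quantitative-recurrence} with the fixed prime $p=\min T$ and to show that at a proper record the quantity $U_{p,N}$ is comparable to $H_N$. Once $U_{p,N}\ll_T H_N$ is known, \eqref{eq:frontier-from-U} gives \eqref{eq:record-frontier-bound} and \eqref{eq:history-from-U} gives \eqref{eq:record-history-bound}. The implied constants depend only on $p$ and on the comparison constant, hence only on $T$. The hypothesis that $p$ appears in the prefix holds for all large $N$ by Lemma~\ref{lem:unseen-ceiling}.

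The main step is the bound $U_{p,N}\le pH_N+O_T(1)$. Let $a_i$ with $i\le N$ be a multiple of $p$, so it is $T$-covered, and split into three cases.
\begin{enumerate}
\item If $a_i$ is $T$-proper, then $a_i\le H_N$ by the definition of $H_N$.
\item If $a_i$ is $T$-full and $i>m$, then \eqref{eq:full-envelope} from Corollary~\ref{cor:full-envelope} gives $a_i<pH_N$.
\item If $i\le m$, the terms form a fixed finite set whose maximum $M_m$ depends on $T$ and the choice of cutoff but not on $N$.
\end{enumerate}
By Corollary~\ref{cor:full-envelope}, $H_N$ is unbounded along proper records. So for all sufficiently large record indices $M_m\le H_N$, and hence $U_{p,N}\le pH_N$. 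Since $U_{p,N}\ge p$, we also have $\log(U_{p,N}+2)\ll\log H_N$. Substituting into \eqref{eq:frontier-from-U} and \eqref{eq:history-from-U} then gives $Q_N\ll_p pH_N\log(pH_N)\ll_T H_N\log H_N$ and $M_N\ll_T H_N^2\log^2H_N$.

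For the final assertion, note that $\log^2H_N\le H_N$ for large $H_N$. So $M_N\le H_N^{C_T}$ with, say, $C_T=4$ once $H_N$ is large enough to absorb the implied constant. The word ``sufficiently large'' in the statement is used exactly for this absorption and for the condition $M_m\le H_N$ above. Every historical term, including any $a_{n-2}$ constraining an earlier full selection, lies in the prefix and is bounded by $M_N$.

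I expect the main obstacle to be the third case: the finitely many pre-episode terms. It is handled only because the statement allows ``sufficiently large'' record indices and because $H_N$ is unbounded. Apart from that, the argument is a direct substitution into Lemma~\ref{lem:quantitative-recurrence}.
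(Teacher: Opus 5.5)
Your proposal is correct and follows the paper's route. You bound $U_{p,N}\le pH_N$ at large proper records by splitting the historical multiples of $p$ into proper values (at most $H_N$), full values after $m$ (below $pH_N$ by \eqref{eq:full-envelope}), and the fixed prefix through $m$ (absorbed because $H_N$ is unbounded), and then substitute into \eqref{eq:frontier-from-U} and \eqref{eq:history-from-U}. One small slip: $\log(U_{p,N}+2)\ll_T\log H_N$ follows from the upper bound $U_{p,N}\le pH_N$, not from $U_{p,N}\ge p$.
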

\begin{proof}
Set
\[
U_{p,N}=\max\{a_i:i\le N,\ p\mid a_i\}.
\]
Every historical multiple of $p$ is full or proper. Proper values are at most
$H_N$, full values selected after $m$ are below $pH_N$ by
\eqref{eq:full-envelope}, and the finitely many values in the prefix through
$m$ are also below $pH_N$ for large $H_N$. Hence $U_{p,N}\le pH_N$.

Equation~\eqref{eq:frontier-from-U} of Lemma~\ref{lem:quantitative-recurrence}
then gives
\[
Q_N\ll_p U_{p,N}\log(U_{p,N}+2)\ll_T H_N\log H_N,
\]
which is \eqref{eq:record-frontier-bound}. Likewise,
\eqref{eq:history-from-U} gives
\[
M_N\ll_p U_{p,N}^2\log^2(U_{p,N}+2)
\ll_T H_N^2\log^2 H_N,
\]
which is \eqref{eq:record-history-bound}. Here $p$ is fixed by $T$.
The polynomial bound follows by absorbing the logarithmic factor into a fixed
power of $H_N$.
\end{proof}

\subsection{A lower bound for the selected full population}

\begin{lemma}[There are enough selected full values]\label{lem:source-population}
Let $T$ be a finite set of at least two primes and let $p=\min T$. Suppose
$\operatorname{Sat}(T)$ fails, let $n_0$ be a
$\neg\operatorname{Sat}$-cutoff for $T$, suppose infinitely many $T$-covered
episodes begin after $n_0$, and let $m+1$ be the first such episode start.
Then at every sufficiently large proper-record index $N>m$, where $H_N=a_N$,
\[
F_N\gg_T\frac{H_N}{\log\log H_N}.
\]
\end{lemma}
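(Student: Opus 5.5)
The plan is to exploit greediness at the proper record itself. Let $N>m$ be a large proper-record index with $H_N=a_N$, and write $B=a_{N-1}$ and $A=a_{N-2}$. Since $N>m$ and $m+1$ is an episode start after $n_0$, Corollary~\ref{cor:post-sat-episode-grammar} shows that the proper term $a_N$ is the second term of its covered episode. Hence $B$ is a $T$-full starter and $A$ is $T$-free. I will exhibit about $H_N/\log\log H_N$ full integers that are locally admissible at the selection of $a_N$ and lie below $H_N$. By Lemma~\ref{lem:smaller-historical} all of them were then already selected, and each is full, so each contributes to $F_N$.

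The candidate family is
\[
\mathcal C_N=\{\rad(T)\,u:\ 1<u<H_N/\rad(T),\ (u,D)=1\},\qquad D=\rad(T)\,A\,B .
\]
I will check the three support conditions for $\rad(T)u$ at index $N$.
\begin{enumerate}
\item \emph{Overlap.} It holds through any target prime, since $B$ is full and so $T\subseteq P(B)$.
\item \emph{Lag-two disjointness.} $A$ is free, so it contains no target prime, and $u$ is coprime to $A$.
\item \emph{Novelty.} $u>1$ is coprime to $B$, so any prime divisor of $u$ lies outside $P(B)$.
\end{enumerate}
Each such integer is $T$-full. Its support strictly contains $T$, so the exact-support restriction of the cutoff plays no role. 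Distinct $u$ give distinct values. Since every member of $\mathcal C_N$ is below $H_N=a_N$ and locally admissible at that selection, it occurred at some index $<N$. Therefore $F_N\ge|\mathcal C_N|$.

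To count $\mathcal C_N$, I apply \eqref{eq:coprime-count} with $z=H_N/\rad(T)$ and modulus $D$:
\[
|\mathcal C_N|\ \ge\ \frac{H_N}{\rad(T)}\,\rho(D)-O\bigl(2^{\omega(D)}\bigr)-1 .
\]
The key input is that $D$ is polynomially bounded in $H_N$. By Corollary~\ref{lem:record-history}, both $A$ and $B$ are at most $M_N\ll_T H_N^2\log^2H_N$, and $\rad(T)$ is a constant. Hence $D\le H_N^{C}$ for a fixed $C$ depending only on $T$, once $N$ is large. Lemma~\ref{lem:polynomial-modulus} with $X=H_N$ then gives two bounds:
\[
\rho(D)\gg_T\frac{1}{\log\log H_N},\qquad 2^{\omega(D)}=H_N^{o(1)} .
\]
The error term is therefore negligible, and $F_N\gg_T H_N/\log\log H_N$ at all sufficiently large proper records. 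These records exist in abundance by Corollary~\ref{cor:full-envelope}.

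The step I expect to need the most care is the polynomial control of $D$. It rests entirely on the quantitative recurrence at a proper record, Corollary~\ref{lem:record-history}, which in turn uses the full envelope \eqref{eq:full-envelope} to bound $U_{p,N}$. Without the factor $A$ being controlled, the totient ratio $\rho(D)$ could be too small. A minor point is to make sure that $N$ is large enough for $a_{N-2}$ to lie after the cutoff. This is automatic, because $N-1>m\ge n_0$ is a full starter that follows a free term.
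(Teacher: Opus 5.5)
Your proof is correct. It differs from the paper's proof in one respect.

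The paper uses the same local picture $A_{\rm free}\to V_{\rm full}\to H_N$ and essentially the same modulus $D=\rad(T)AV$, but its candidates are the \emph{proper} integers $pu<H_N$. For these, $p$ supplies overlap and $u$ supplies novelty. The paper first obtains $P_N\gg_T H_N/\log\log H_N$. It then transfers this to $F_N$ through the grammar pairing $P_N-P_m\le F_N-F_m$, which charges each proper term after $m$ to its distinct full starter.

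You instead take the \emph{full} candidates $\rad(T)u$. They are admissible at $H_N$ for the same reasons: all of $T$ overlaps the full $B$, the free $A$ misses $T$, and $u$ supplies novelty. So they count directly toward $F_N$, and the transfer step disappears. Your use of the episode grammar reduces to identifying $a_{N-1}$ as full and $a_{N-2}$ as free.

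The cost of your route is a constant $1/\rad(T)$ instead of $1/p$, which is immaterial for $\gg_T$. The paper's route gives the lower bound on proper values as a by-product. Both arguments depend in the same way on Corollary~\ref{lem:record-history} and Lemma~\ref{lem:polynomial-modulus} for the polynomial control of $D$ and the coprime count. Incidentally, the factor $\rad(T)$ in your $D$ is redundant, since $\rad(T)\mid B$.
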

\begin{proof}
Because $N>m$ and $H_N=a_N$ is proper, it lies in a covered episode beginning
after $n_0$. Corollary~\ref{cor:post-sat-episode-grammar} says that every
proper term in such an episode is its second term, immediately after a full
starter. Thus the local neighborhood at $H_N$ is
\[
A_{\rm free}\longrightarrow V_{\rm full}\longrightarrow (H_N)_{\rm proper}.
\]
Put $D=\rad(T)AV$. Every integer
\[
z=pu<H_N,\qquad u>1,\qquad (u,D)=1
\]
is proper: $p$ is its only target prime, and $u>1$ has a prime divisor
outside $T$. Its three local admissibility checks at $H_N$ are
\[
\begin{aligned}
p\mid V\text{ and }p\mid pu
   &\quad\text{(overlap)},\\
(pu,A)=1
   &\quad\text{(lag-two avoidance)},\\
u>1\text{ and }(u,V)=1
   &\quad\text{(novelty)}.
\end{aligned}
\]
The second line uses that $A$ is free and $(u,A)=1$; the third supplies a
prime divisor of $u$ absent from $V$. Thus every such $z<H_N$ occurred
earlier by Lemma~\ref{lem:smaller-historical}.

By Corollary~\ref{lem:record-history}, $D$ is bounded by a fixed power of
$H_N$. To count the admissible values of $u$, apply
\eqref{eq:coprime-count} with $z=H_N/p$:
\[
\#\left\{1\le u<\frac {H_N}p:(u,D)=1\right\}
=\frac {H_N}p\rho(D)+O(2^{\omega(D)}).
\]
Our candidate family requires $u>1$, so we discard the possible value $u=1$.
Therefore
\[
\#\left\{1<u<\frac {H_N}p:(u,D)=1\right\}
\ge \frac {H_N}p\rho(D)-O(2^{\omega(D)})-1.
\]
Since $D$ is polynomially bounded in $H_N$, the two conclusions of
Lemma~\ref{lem:polynomial-modulus}, applied with $X=H_N$, give separately
\[
\rho(D)\gg_T\frac1{\log\log H_N},
\qquad
2^{\omega(D)}=H_N^{o(1)}.
\]
Hence
\[
\#\left\{1<u<\frac {H_N}p:(u,D)=1\right\}
\gg_T\frac {H_N}{\log\log H_N}-H_N^{o(1)}
\gg_T\frac {H_N}{\log\log H_N}.
\]
Each such $u$ gives a distinct earlier proper value $pu$. Thus
$P_N\gg_T H_N/\log\log H_N$.

By Corollary~\ref{cor:post-sat-episode-grammar}, every proper term selected
after $m$ is immediately preceded by a distinct full term. Hence
\[
P_N-P_m\le F_N-F_m,
\]
so the fixed prefix contributes only an additive constant. The lower bound
for $P_N$ therefore gives the stated lower bound for $F_N$.
\end{proof}

\section{Removing exceptional full values}
\label{sec:exceptions}

For a finite target set $T$ with $|T|\ge2$, write $p=\min T$. Given a real
parameter $X\to\infty$, set
\begin{equation}
L=\log\log X,\qquad
Y=(\log X)^3,\qquad K=L^4,
\label{eq:cutoffs}
\end{equation}
and, for every positive integer $n$, define
\begin{equation}
\kappa(n)=\#\{r>Y:r\text{ is prime and }r\mid n\}.
\label{eq:kappa}
\end{equation}
The possible full target parts form the set
\begin{equation}
\mathcal C_T=\left\{\prod_{t\in T}t^{e_t}:e_t\ge1\right\}.
\label{eq:full-target-parts}
\end{equation}
Each full value has a unique expression $f=cd_0$ with
$c\in\mathcal C_T$ and $(d_0,\rad(T))=1$. Its entire target part is $c$.

For the application to the EW sequence, suppose $\operatorname{Sat}(T)$
fails, let $n_0$ be a $\neg\operatorname{Sat}$-cutoff for $T$, suppose
infinitely many $T$-covered episodes begin after $n_0$, and let $m+1$ be the
first such episode start. At a sufficiently large proper-record index $N>m$,
put
\begin{equation}
H_N=a_N,\qquad X=pH_N,
\label{eq:record-X}
\end{equation}
and use the cutoffs in \eqref{eq:cutoffs}. Thus $N$ is the sequence index,
whereas $X$ is the numerical size scale attached to that proper record; the
arithmetic counts below are taken over integers up to $X$.

\begin{definition}[Good source]\label{def:good-source}
At this chosen record, let $f=a_i$ be a selected full value with $m<i\le N$,
and let $c$ be its entire target part. We call $f$ a \emph{good source} if
\begin{equation}
c\le K,\qquad r^2\nmid f\text{ for every prime }r>Y,
\qquad \kappa(f)\ge4.
\label{eq:good-source}
\end{equation}
\end{definition}

Let $G_N$ count the good sources at this record. The notions $\kappa$ and
``good'' are record-dependent: when $N$ changes, so do $X,L,Y,K$.

The condition $\kappa(f)\ge4$ is chosen because one large outside prime may
later have to be kept fixed in order to preserve a required local incidence.
If $k=\kappa(f)$, then reserving at most one such prime leaves at least
$k-1$ others available. Hence, when $k\ge4$,
\[
\frac{k-1}{k}\ge\frac34.
\]
Thus at least three quarters of the large prime divisors remain available.
The quantitative reason that this amount of reserve is sufficient is given
later, when the relevant comparison is introduced.

\subsection{Large target parts and repeated large primes}

\begin{lemma}[Large target parts are rare]\label{lem:large-target-parts}
Let $T$ be a fixed finite set of at least two primes, let $X\to\infty$, and
define $L,K$ by \eqref{eq:cutoffs}. The number of $T$-full integers
$n\le X$ whose entire target part exceeds $K$ is $o_T(X/L^2)$.
\end{lemma}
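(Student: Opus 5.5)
Each $T$-full $n\le X$ can be written uniquely as $n=cd_0$ with $c\in\mathcal C_T$ and $(d_0,\rad(T))=1$. For a fixed $c$, the number of admissible $d_0\le X/c$ is at most $X/c$. The count in question is therefore bounded by
\[
\sum_{\substack{c\in\mathcal C_T\\ c>K}}\frac{X}{c},
\]
and it suffices to show that the tail $\sum_{c\in\mathcal C_T,\,c>K}1/c$ is $o_T(L^{-2})$. Since $K=L^4$, any bound of the form $K^{-1/2+o(1)}$ for this tail is enough. That would give $L^{-2+o(1)}$, which is not quite $o(L^{-2})$, so I will aim for something stronger.

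I will use Rankin's trick. For any $0<\sigma<1$,
\[
\sum_{\substack{c\in\mathcal C_T\\ c>K}}\frac1c
\le K^{-\sigma}\sum_{c\in\mathcal C_T}c^{\sigma-1}
=K^{-\sigma}\prod_{t\in T}\frac{t^{\sigma-1}}{1-t^{\sigma-1}}.
\]
The product is finite because $t\ge2$ and $\sigma<1$, and it depends only on $T$ and $\sigma$. Taking, say, $\sigma=3/4$ gives a tail of $O_T(K^{-3/4})=O_T(L^{-3})=o(L^{-2})$. Multiplying by $X$ then yields the claimed $o_T(X/L^2)$.

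Two small points need checking. First, the uniqueness of the factorization $n=cd_0$ is what justifies counting by $c$; it holds because the $T$-part of $n$ is exactly $c$ and each exponent $e_t$ is at least one. Second, $X/c$ is a valid upper bound for the number of $d_0$ even though we drop the coprimality condition on $d_0$.

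I do not expect any real obstacle. The only place that needs care is choosing $\sigma$ so that the power of $L$ exceeds $2$, and any $\sigma>1/2$ does this. An alternative I could fall back on is a direct count: $\#\{c\in\mathcal C_T:c\le x\}\ll_T(\log x)^{|T|}$, and partial summation gives a tail of $O_T((\log K)^{|T|}/K)$, which also works.
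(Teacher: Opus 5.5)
Your argument is correct, and its first step is the same as the paper's: both bound the count by $X\sum_{c\in\mathcal C_T,\,c>K}1/c$, using that each target part $c$ has at most $X/c$ multiples up to $X$.

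The two proofs differ in how they estimate this tail. The paper counts target parts directly. It places the exponent vectors of all $c\le u$ in a box to get $\#\{c\in\mathcal C_T:c\le u\}=O_T((\log 2u)^{|T|})$, then sums over dyadic ranges to obtain a tail of $O_T((\log 2K)^{|T|}/K)$, hence a count of $O_T(X(\log L)^{|T|}/L^4)$. This is essentially the fallback you mention at the end.

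Your Rankin bound $1/c<K^{-\sigma}c^{\sigma-1}$ for $c>K$ replaces both the lattice-point count and the dyadic summation with the Euler product $\prod_{t\in T}t^{\sigma-1}/(1-t^{\sigma-1})$. This is a shifted version of the paper's $\Sigma_T=\prod_{t\in T}1/(t-1)$. Your route is shorter and makes the $T$-dependence explicit. It is also lossier: it saves only $K^{-\sigma}$ with $\sigma<1$ rather than $K^{-1}$ up to logarithms, and its constant blows up as $\sigma\to1$. That loss does not matter, since only a tail of size $o(K^{-1/2})=o(L^{-2})$ is required, so any fixed $\sigma\in(1/2,1)$ suffices.

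Your aside that a tail of size $K^{-1/2+o(1)}$ would be ``enough'' contradicts your next sentence. The correct threshold is $o(K^{-1/2})$, which is exactly what $\sigma>1/2$ delivers.
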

\begin{proof}
Write $T=\{t_1,\ldots,t_s\}$, where $s=|T|$. A full target part
$c\in\mathcal C_T$ has the form
\[
c=\prod_{i=1}^s t_i^{e_i},\qquad e_i\ge1.
\]
If $c\le u$, then for each $i$,
\[
e_i\log t_i\le\log c\le\log u.
\]
So, the range of possible multiplicities for a single element $t_i\in T$ is
\[
1\le e_i\le\frac{\log u}{\log t_i}.
\]
These coordinatewise bounds place every admissible exponent vector inside a
rectangular box. This box may also contain vectors whose corresponding product exceeds $u$;
that only makes it an upper bound. The number of lattice points in the box is
\[
\prod_{i=1}^s\left\lfloor\frac{\log u}{\log t_i}\right\rfloor
\le\prod_{i=1}^s\frac{\log u}{\log t_i}
=O_T((\log(2u))^s).
\]

We first use this bound to count the $T$-full integers whose target part
exceeds $K$. For a fixed target part $c\in\mathcal C_T$, every $T$-full
integer $n\le X$ with target part $c$ is divisible by $c$. Hence there are at
most $X/c$ such integers. Therefore the number with target part greater than
$K$ is at most
\[
X\sum_{\substack{c\in\mathcal C_T\\c>K}}\frac1c.
\]
It remains to bound the displayed sum. Split the target parts $c>K$ into the
dyadic ranges
\[
2^jK<c\le2^{j+1}K\qquad(j\ge0).
\]
On the $j$th range, every term $1/c$ is at most $1/(2^jK)$, while the number
of target parts in that range is at most the number of target parts up to
$2^{j+1}K$. Applying the preceding estimate with $u=2^{j+1}K$, this number is
\[
O_T\!\left((\log(2^{j+2}K))^s\right)
=O_T\!\left((\log(2K)+j)^s\right).
\]
Therefore
\[
\sum_{\substack{c\in\mathcal C_T\\c>K}}\frac1c
\ll_T \frac1K\sum_{j\ge0}2^{-j}(\log(2K)+j)^s.
\]
Since $s$ is fixed and $\sum_{j\ge0}j^\ell2^{-j}<\infty$ for every
$0\le\ell\le s$, expanding $(\log(2K)+j)^s$ shows that
\[
\sum_{j\ge0}2^{-j}(\log(2K)+j)^s
\ll_s(\log(2K))^s.
\]
Hence
\[
\sum_{\substack{c\in\mathcal C_T\\c>K}}\frac1c
\ll_T \frac{(\log(2K))^s}{K}.
\]
Multiplying by $X$ gives
\[
O_T\!\left(X\frac{(\log(2K))^s}{K}\right)
=O_T\!\left(X\frac{(\log L)^s}{L^4}\right)
=o_T(X/L^2),
\]
because $K=L^4$ and $s$ is fixed.
\end{proof}

\begin{lemma}[Repeated large primes are rare]\label{lem:repeated-large-primes}
Let $X\to\infty$ and define $L,Y$ by \eqref{eq:cutoffs}. The number of
positive integers $n\le X$ divisible by $r^2$ for some prime $r>Y$ is
$o(X/L^2)$.
\end{lemma}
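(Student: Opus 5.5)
A union bound over the primes $r>Y$ will do it, using only the elementary count of multiples of $r^2$. For each prime $r>Y$, the number of $n\le X$ with $r^2\mid n$ is at most $X/r^2$. Only primes with $r\le\sqrt X$ contribute, but I will not need this truncation. Summing gives
\[
\#\{n\le X:\exists\, r>Y\text{ prime},\ r^2\mid n\}
\le X\sum_{\substack{r>Y\\r\ \mathrm{prime}}}\frac1{r^2}
\le X\sum_{k>Y}\frac1{k(k-1)}
\le \frac{X}{\lfloor Y\rfloor}
\ll \frac{X}{Y}.
\]
The middle step simply enlarges the sum over primes to a sum over all integers $k>Y$ and telescopes. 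No prime number theorem input is needed.

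With the cutoffs \eqref{eq:cutoffs}, $Y=(\log X)^3$ and $L=\log\log X$. It then suffices to check
\[
\frac{X}{Y}\cdot\frac{L^2}{X}=\frac{(\log\log X)^2}{(\log X)^3}\longrightarrow0
\qquad(X\to\infty),
\]
which gives the stated $o(X/L^2)$. In fact the bound is much stronger, of order $X/(\log X)^{3-o(1)}$.

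There is no genuine obstacle here. The only care points are to count each $n$ at most once, which the union bound handles by overcounting, and to note that $Y\to\infty$, so $\lfloor Y\rfloor\ge Y/2$ for large $X$. The estimate is uniform and depends on no target set, consistent with the statement carrying no subscript $T$.
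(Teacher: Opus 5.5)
Your proof is correct and is the same argument as the paper's: a union bound giving $X\sum_{r>Y}r^{-2}\ll X/Y$, followed by $L^2/Y\to0$. The only difference is that you make the tail estimate explicit by telescoping $\sum_{k>Y}1/(k(k-1))=1/\lfloor Y\rfloor$, where the paper just asserts $\ll X/Y$.
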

\begin{proof}
A union bound, even over all integers $r>Y$, gives
\[
\#\{n\le X:\exists\text{ prime }r>Y,\ r^2\mid n\}
\le X\sum_{\substack{r>Y\\r\ \mathrm{prime}}}\frac1{r^2}
\ll X/Y=o(X/L^2).
\]
\end{proof}

\subsection{Few large prime divisors: a fourth-moment estimate}

The next lemma gives a fourth-moment estimate for divisibility by primes in a
fixed finite set. A second-moment argument would naturally give only an
$O(X/L)$ exceptional bound, on the same scale as the lower bound supplied by
Lemma~\ref{lem:source-population}. The fourth moment is
used specifically to gain the extra factor of $L$ needed for a negligible
exceptional population.

\begin{lemma}[Fourth-moment transfer]\label{lem:moment-transfer}
Let $\mathcal R$ be a finite set of primes, and let $X\ge1$. Put
\[
\nu(n)=\sum_{r\in\mathcal R}\ind_{r\mid n},\qquad
\mu=\sum_{r\in\mathcal R}\frac1r.
\]
Then
\begin{equation}
\sum_{n\le X}(\nu(n)-\mu)^4
\le X(\mu+3\mu^2)+16|\mathcal R|^4.
\label{eq:moment-transfer}
\end{equation}
\end{lemma}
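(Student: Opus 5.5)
The plan is to compare the arithmetic sum with a probabilistic model in which divisibility by distinct primes $r\in\mathcal R$ is replaced by independent Bernoulli variables $\xi_r$ with $\Pr(\xi_r=1)=1/r$. Write
\[
\nu(n)-\mu=\sum_{r\in\mathcal R}Y_r(n),\qquad Y_r(n)=\ind_{r\mid n}-\frac1r .
\]
Expanding the fourth power, I get a sum over ordered $4$-tuples $(r_1,\ldots,r_4)\in\mathcal R^4$ of $\sum_{n\le X}\prod_{i=1}^4Y_{r_i}(n)$.

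For a fixed tuple, I expand the product $\prod_i(\ind_{r_i\mid n}-1/r_i)$ over subsets $S\subseteq\{1,2,3,4\}$. The $S$-term is
\[
\prod_{i\notin S}\Bigl(-\frac1{r_i}\Bigr)\cdot\#\{n\le X:\ell_S\mid n\},
\]
where $\ell_S$ is the least common multiple of $\{r_i:i\in S\}$. Since the $r_i$ are primes, $\ell_S$ is the product of the \emph{distinct} primes among them. I then write $\#\{n\le X:\ell_S\mid n\}=X/\ell_S-\theta_S$ with $0\le\theta_S<1$.

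The main terms then reassemble exactly into $X\,\mathbb E\bigl[\prod_i(\xi_{r_i}-1/r_i)\bigr]$. This works because $\mathbb E[\prod_{i\in S}\xi_{r_i}]=1/\ell_S$ for independent Bernoulli variables, with repeated primes handled correctly since $\xi_r^k=\xi_r$. The error for this tuple is at most
\[
\sum_S\prod_{i\notin S}\frac1{r_i}=\prod_{i=1}^4\Bigl(1+\frac1{r_i}\Bigr)\le\Bigl(\tfrac32\Bigr)^4<16 .
\]
Summing over the $|\mathcal R|^4$ tuples gives the error term $16|\mathcal R|^4$, and gives main term $X\,\mathbb E[Z^4]$ with $Z=\sum_r Z_r$, $Z_r=\xi_r-1/r$.

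It remains to bound $\mathbb E[Z^4]$. The $Z_r$ are independent and centered. So in the expansion only index patterns where each prime appears at least twice survive, namely all four equal or two distinct pairs. This gives
\[
\mathbb E[Z^4]=\sum_r\mathbb E[Z_r^4]+3\sum_{r\ne s}\mathbb E[Z_r^2]\,\mathbb E[Z_s^2]
\le\sum_r\mathbb E[Z_r^4]+3\Bigl(\sum_r\mathbb E[Z_r^2]\Bigr)^2 .
\]
Since $|Z_r|\le1$, we have $\mathbb E[Z_r^4]\le\mathbb E[Z_r^2]=\frac1r\bigl(1-\frac1r\bigr)\le\frac1r$. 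Hence $\mathbb E[Z^4]\le\mu+3\mu^2$, and \eqref{eq:moment-transfer} follows.

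The only delicate step is the bookkeeping that identifies the main term with the probabilistic moment when the tuple contains repeated primes. This is handled by using the least common multiple together with the idempotence $\xi_r^k=\xi_r$. The remaining steps are routine.
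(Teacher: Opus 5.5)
Your proposal is correct and follows essentially the same route as the paper. You expand over ordered quadruples, transfer each quadruple to the independent Bernoulli model with an $X$-independent error (your bound $\prod_i(1+1/r_i)\le(3/2)^4$ is slightly sharper than the paper's crude count of $16$ terms), and then bound $\mathbb E Z^4$ using independence and centering, where your $3\sum_{r\ne s}$ is exactly the paper's $6\sum_{r<s}$.
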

\begin{proof}
For $r\in\mathcal R$, let $B_r$ be independent Bernoulli variables with
$\Pr(B_r=1)=1/r$, and write $Z_r=B_r-1/r$.
We will prove the estimate in two steps. First, we compare
$\sum_{n\le X}(\nu(n)-\mu)^4$ with
$X\,\mathbb E(\sum_{r\in\mathcal R}Z_r)^4$ term-by-term after expanding both
fourth powers. Then we use independence and centering of the $Z_r$ to bound
the resulting Bernoulli fourth moment.

\medskip\noindent\textbf{(QT) Quadruple transfer.}
Fix $r_1,\ldots,r_4\in\mathcal R$, allowing repetitions, and expand
\[
\prod_{i=1}^4\left(\ind_{r_i\mid n}-\frac1{r_i}\right).
\]
Each of the at most $16$ terms has coefficient of absolute value at most
one. Its product of indicators is the condition $D\mid n$, where $D$ is
the product of the distinct primes present in that term. This condition
holds for $X/D+O(1)$ integers $n\le X$, with error at most one. The
corresponding product of Bernoulli variables has expectation $1/D$.
This also handles repeated primes, since both types of indicators satisfy
$I^j=I$ for $j\ge1$, and the empty product, for which $D=1$. Consequently
\begin{equation}
\left|\sum_{n\le X}\prod_{i=1}^4
 \left(\ind_{r_i\mid n}-\frac1{r_i}\right)
-X\,\mathbb E\prod_{i=1}^4 Z_{r_i}\right|\le16.
\label{eq:quadruple-transfer}
\end{equation}

\medskip\noindent\textbf{(BM) Bernoulli fourth moment.}
Independence and centering leave only fourth powers and paired squares in
the expectation. Since $|Z_r|\le1$ and $\mathbb E Z_r^2\le1/r$,
\begin{equation}
\begin{aligned}
\mathbb E\left(\sum_{r\in\mathcal R}Z_r\right)^4
&=\sum_r\mathbb E Z_r^4
 +6\sum_{r<s}\mathbb E Z_r^2\,\mathbb E Z_s^2\\
&\le\mu+3\mu^2.
\end{aligned}
\label{eq:bernoulli-fourth-moment}
\end{equation}

\medskip\noindent\textbf{Combining the two estimates.}
There are $|\mathcal R|^4$ ordered quadruples. Expanding the fourth powers
and summing the quadruple-transfer estimate \textup{(QT)} over all of them
gives
\[
\sum_{n\le X}(\nu(n)-\mu)^4
\le X\,\mathbb E\left(\sum_{r\in\mathcal R}Z_r\right)^4
     +16|\mathcal R|^4.
\]
Applying the Bernoulli-moment estimate \textup{(BM)} now gives
\[
\sum_{n\le X}(\nu(n)-\mu)^4
\le X(\mu+3\mu^2)+16|\mathcal R|^4,
\]
which is \eqref{eq:moment-transfer}.
\end{proof}

\begin{lemma}[At most three large prime divisors]\label{lem:fourth-moment}
Let $X\to\infty$, define $L,Y$ by \eqref{eq:cutoffs}, and define $\kappa$
by \eqref{eq:kappa}. Then
\[
\#\{n\le X:\kappa(n)\le3\}=O(X/L^2).
\]
\end{lemma}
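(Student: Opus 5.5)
Apply Lemma~\ref{lem:moment-transfer} to the set $\mathcal R$ of primes in a window $Y<r\le Z$, with $Z$ a small power of $X$, and use Chebyshev's inequality at the fourth-moment level. Integers with $\kappa(n)\le3$ have $\nu(n)\le3$. We will show that $\mu$ is about $\log\log X$, so every such $n$ has $|\nu(n)-\mu|\ge\mu/2$. Summing the fourth power and dividing by $(\mu/2)^4$ then gives the bound.

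\textbf{Choice of window.} Take
\[
\mathcal R=\{r\text{ prime}:Y<r\le Z\},\qquad Z=X^{1/8}.
\]
Then $|\mathcal R|\le\pi(Z)\le X^{1/8}$, so $16|\mathcal R|^4\le16X^{1/2}=o(X/L^2)$. Clearly $\nu(n)\le\kappa(n)$ for every $n$, since $\nu$ counts only some of the primes exceeding $Y$. Mertens' estimate \eqref{eq:mertens} gives
\[
\mu=\log\log Z-\log\log Y+o(1).
\]
Substituting $Z=X^{1/8}$ and $Y=(\log X)^3$:
\[
\log\log Z=\log\log X-\log 8,\qquad \log\log Y=\log 3+\log\log\log X=\log3+\log L.
\]
Hence $\mu=L-\log L+O(1)$, so $\mu\sim L$. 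In particular $\mu\ge 8$ for large $X$.

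\textbf{Chebyshev step.} If $\kappa(n)\le3$, then $\nu(n)\le3\le\mu/2$, so $(\nu(n)-\mu)^4\ge(\mu/2)^4$. Therefore
\[
\#\{n\le X:\kappa(n)\le3\}\cdot\frac{\mu^4}{16}
\le\sum_{n\le X}(\nu(n)-\mu)^4
\le X(\mu+3\mu^2)+16|\mathcal R|^4 .
\]
Dividing by $\mu^4/16$ bounds the count by
\[
\frac{16X(\mu+3\mu^2)}{\mu^4}+o(X)\cdot\frac{1}{\mu^4}\ll\frac{X}{\mu^2}+o(X/L^6).
\]
Since $\mu\asymp L$, this is $O(X/L^2)$, as claimed. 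This is where the fourth moment pays off: the variance-sized term $3\mu^2$ is divided by $\mu^4$, gaining the factor $L^2$ rather than $L$.

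\textbf{Main obstacle.} The only delicate point is the asymptotic $\mu\asymp L$ after truncating below at $Y=(\log X)^3$. The lower cutoff removes only about $\log\log Y=\log L+O(1)$, which is negligible against $L$. I would also check that the $o(1)$ terms from \eqref{eq:mertens} apply at both endpoints, which holds since $Y\to\infty$. The error term $16|\mathcal R|^4$ is harmless once $Z\le X^{1/8}$. Any exponent $Z=X^{\eta}$ with $\eta<1/4$ works equally well.
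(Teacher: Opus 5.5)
Your proposal is correct and follows the paper's proof essentially line for line: the same window $Y<r\le X^{1/8}$, the same Mertens computation $\mu=L-\log L+O(1)$, and the same fourth-moment Chebyshev step, which divides $X(\mu+3\mu^2)+16X^{1/2}$ by a quantity of order $L^4$. One intermediate term, written as $o(X)\cdot\mu^{-4}$, should read $O(X^{1/2})\cdot\mu^{-4}$; your conclusion is unaffected.
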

\begin{proof}
Apply Lemma~\ref{lem:moment-transfer} with $Z=X^{1/8}$ and
$\mathcal R=\{r\text{ prime}:Y<r\le Z\}$, retaining its notation $\nu,\mu$.
For large $X$, $Y<Z$, and Mertens' estimate gives
\begin{equation}
\mu=\log\log Z-\log\log Y+o(1)
=L-\log L+O(1)\sim L.
\label{eq:moment-mean}
\end{equation}
Since $|\mathcal R|\le Z$ and $Z^4=X^{1/2}$, the transfer bound reads
\begin{equation}
\sum_{n\le X}(\nu(n)-\mu)^4
\le X(\mu+3\mu^2)+16X^{1/2}=O(XL^2).
\label{eq:integer-fourth}
\end{equation}
If $\kappa(n)\le3$, then $\nu(n)\le3$. By
\eqref{eq:moment-mean}, each such integer contributes at least
$(\mu-3)^4\gg L^4$ to the left side of \eqref{eq:integer-fourth}.
Dividing by $L^4$ proves the assertion.
\end{proof}

\begin{proposition}[Almost all selected full values are good]
\label{prop:good-sources}
Let $T$ be a finite set of at least two primes and let $p=\min T$. Suppose
$\operatorname{Sat}(T)$ fails, let $n_0$ be a
$\neg\operatorname{Sat}$-cutoff for $T$, suppose infinitely many $T$-covered
episodes begin after $n_0$, and let $m+1$ be the first such episode start. At
every sufficiently large proper-record index $N>m$, put $H_N=a_N$,
$X=pH_N$, and define $L,Y,K,\kappa,G_N$ as above. Then
\[
F_N-G_N=O_T(X/L^2)=o(F_N).
\]
\end{proposition}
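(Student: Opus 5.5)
The plan is to show that every selected full value counted by $F_N$ but not by $G_N$ lies in one of three explicit exceptional sets of integers up to $X$. Each of those sets has size $O_T(X/L^2)$ by the lemmas of this section. The comparison with $F_N$ then comes from Lemma~\ref{lem:source-population}.

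The first step is to confirm that all relevant full values lie below $X$. Take a selected full value $f=a_i$ with $m<i\le N$. Since $N$ is a proper-record index, $H_N=a_N$ is the largest proper value among the first $N$ terms. Corollary~\ref{cor:full-envelope}, in the form \eqref{eq:full-envelope}, gives $f<pH_N=X$. The full values with $i\le m$ are finitely many, so they contribute $O_T(1)$ to $F_N-G_N$, which is harmless. Distinct indices give distinct values, so the count of non-good sources with $m<i\le N$ is at most the number of integers $n\le X$ failing one of the three conditions in \eqref{eq:good-source}.

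The second step bounds the three exceptional sets.
\begin{enumerate}
\item Full integers $n\le X$ whose entire target part exceeds $K$ number $o_T(X/L^2)$, by Lemma~\ref{lem:large-target-parts}.
\item Integers $n\le X$ divisible by $r^2$ for some prime $r>Y$ number $o(X/L^2)$, by Lemma~\ref{lem:repeated-large-primes}.
\item Integers $n\le X$ with $\kappa(n)\le 3$ number $O(X/L^2)$, by Lemma~\ref{lem:fourth-moment}.
\end{enumerate}
The cutoffs $L,Y,K$ in these lemmas are exactly those attached to the record through \eqref{eq:record-X}, so the lemmas apply directly. A union bound gives $F_N-G_N=O_T(X/L^2)+O_T(1)=O_T(X/L^2)$.

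The final step compares this with $F_N$. Lemma~\ref{lem:source-population} gives $F_N\gg_T H_N/\log\log H_N$ at sufficiently large proper records. Since $X=pH_N$ with $p$ fixed, we have $\log\log H_N\sim\log\log X=L$. Hence $F_N\gg_T X/L$, and therefore
\[
X/L^2=O_T(F_N/L)=o(F_N),
\]
because $L\to\infty$ as $H_N\to\infty$ along records, which Corollary~\ref{cor:full-envelope} guarantees.

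The only point I expect to need care is the first step: making sure every counted source really satisfies $f\le X$. This is exactly what the envelope \eqref{eq:full-envelope} and the restriction $i>m$ in Definition~\ref{def:good-source} provide. Everything else is a direct citation of the preceding lemmas.
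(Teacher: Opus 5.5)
Your proposal is correct and follows essentially the same route as the paper: the envelope \eqref{eq:full-envelope} places every full value selected after $m$ below $X$, the three exceptional-set lemmas are applied to all integers up to $X$, the prefix through $m$ contributes a fixed constant, and Lemma~\ref{lem:source-population} with $\log\log H_N = L + o(1)$ gives $F_N \gg_T X/L$. The paper's proof is the same argument in compressed form.
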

\begin{proof}
Every full value selected after $m$ and by index $N$ is below $X=pH_N$ by
\eqref{eq:full-envelope}. Lemmas~\ref{lem:large-target-parts},
\ref{lem:repeated-large-primes}, and \ref{lem:fourth-moment} bound the three
failures of \eqref{eq:good-source} among \emph{all} integers below $X$.
The prefix through $m$ contributes only a fixed number of additional full
values. Finally, Lemma~\ref{lem:source-population} gives
$F_N\gg_T H_N/\log\log H_N$, which is $F_N\gg_T X/L$ because $p$ is fixed by
$T$ and $\log\log H_N=L+O_T(1/\log H_N)$. Thus the exceptional count is
$o(F_N)$.
\end{proof}

\begin{proposition}[Record-scale inputs for the weighted comparison]
\label{prop:record-scale-summary}
Let $T$ be a finite set of at least two primes and let $p=\min T$. Suppose
$\operatorname{Sat}(T)$ fails, let $n_0$ be a
$\neg\operatorname{Sat}$-cutoff for $T$, suppose infinitely many $T$-covered
episodes begin after $n_0$, and let $m+1$ be the first such episode start. At
every sufficiently large proper-record index $N>m$, put $H_N=a_N$ and
$X=pH_N$. Then the following hold simultaneously.
\begin{enumerate}
\item Every selected full value $f=a_i$ with $m<i\le N$ satisfies $f<X$.
\item Writing $A_f=a_{i-2}$, the lag-two predecessor of every such full value satisfies
\[
A_f\le M_N\ll_T H_N^2\log^2 H_N.
\]
Consequently, for every divisor $d$ of such a full value,
\[
A_fd\ll_T H_N^3\log^2 H_N,
\qquad
\omega(A_fd)=O_T(\log X).
\]
\item With $L,Y,K,\kappa,G_N$ defined by \eqref{eq:cutoffs},
\eqref{eq:kappa}, and \eqref{eq:good-source},
\[
F_N\gg_T X/L,
\qquad
F_N-G_N=O_T(X/L^2),
\qquad
G_N=(1-o(1))F_N.
\]
\end{enumerate}
\end{proposition}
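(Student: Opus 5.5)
This proposition only collects results already proved, so the plan is to verify each item against the earlier lemma that supplies it, at a single proper-record index. Fix $N>m$ large enough that Corollary~\ref{lem:record-history}, Lemma~\ref{lem:source-population} and Proposition~\ref{prop:good-sources} all apply. Each of these holds for all sufficiently large proper-record indices, so one common threshold works for all three.

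For item~1, take $f=a_i$ full with $m<i\le N$. The full envelope \eqref{eq:full-envelope} of Corollary~\ref{cor:full-envelope} gives $f<pH_N=X$ directly.

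For item~2, note first that $i\ge m+1\ge3$ once $m\ge2$, which we may arrange by enlarging the cutoff if needed. Then $A_f=a_{i-2}$ is a term of the prefix, so $A_f\le M_N$. The bound $M_N\ll_T H_N^2\log^2H_N$ is \eqref{eq:record-history-bound}.

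For a divisor $d$ of $f$ we have $d\le f<pH_N$. Hence
\[
A_fd\ll_T H_N^3\log^2H_N,
\]
since $p$ depends only on $T$. Because $X=pH_N$, this product is at most $X^4$ for large $X$.

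For the count of prime factors, use the first step of the proof of Lemma~\ref{lem:polynomial-modulus}: any $D\le X^C$ has $\omega(D)=O_C(\log X/\log\log X)$, which is certainly $O_T(\log X)$. One could even bound this crudely by $\omega(D)\le\log_2 D$.

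For item~3, the bound $F_N\gg_T X/L$ comes from Lemma~\ref{lem:source-population}. That lemma gives $F_N\gg_T H_N/\log\log H_N$. Since $X=pH_N$, we have $H_N\asymp_T X$, and
\[
\log\log H_N=\log\log X+o(1)=L+o(1),
\]
which yields $F_N\gg_T X/L$.

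The estimate $F_N-G_N=O_T(X/L^2)$ is exactly Proposition~\ref{prop:good-sources}. It combines Lemmas~\ref{lem:large-target-parts}, \ref{lem:repeated-large-primes} and \ref{lem:fourth-moment} with the envelope from item~1, which places every selected full value after $m$ below $X$; the prefix up to $m$ adds only $O(1)$ further full values.

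Dividing by the lower bound just obtained gives
\[
\frac{F_N-G_N}{F_N}\ll_T\frac{1}{L}\to0,
\]
so $G_N=(1-o(1))F_N$. The word ``simultaneously'' in the statement holds because all three items are evaluated at the same $N$ with the same $X,L,Y,K$.

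No step poses a real obstacle. The only points needing care are bookkeeping. The lag-two index $i-2$ must lie in the prefix, which requires $i\ge3$. The relations between $H_N$ and $X$ and between $\log\log H_N$ and $L$ must be checked to hold uniformly in $N$, with constants depending only on $T$.
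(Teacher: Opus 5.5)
Your proposal is correct and matches the paper's proof. The paper also cites \eqref{eq:full-envelope} for item~1, Corollary~\ref{lem:record-history} together with $d\le f<pH_N$ and the crude bound $2^{\omega(n)}\le n$ for item~2, and Lemma~\ref{lem:source-population} with Proposition~\ref{prop:good-sources} for item~3. One step should be dropped: you do not need to enlarge the cutoff to guarantee $i\ge3$, because a full value has at least two distinct prime divisors and so is never $a_1=1$ or $a_2=2$; moreover, enlarging $n_0$ could shift $m$ and hence change the statement being proved.
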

\begin{proof}
The first assertion is \eqref{eq:full-envelope}. The bound on $A_f$ is
Corollary~\ref{lem:record-history}; since $d\le f<X=pH_N$ and $p$ is fixed by
$T$, the displayed product bound follows. The elementary inequality
$2^{\omega(n)}\le n$ then gives $\omega(A_fd)=O_T(\log X)$. The population
bounds are Lemma~\ref{lem:source-population} and
Proposition~\ref{prop:good-sources}.
\end{proof}

\section{Admissible prime exchanges}
\label{sec:exchange}

Let $T$ be a finite set of at least two primes, write $p<q$ for its two
smallest members, suppose $\operatorname{Sat}(T)$ fails, and let $n_0$ be a
$\neg\operatorname{Sat}$-cutoff for $T$. Suppose infinitely many $T$-covered
episodes begin after $n_0$, and let $m+1$ be the first such episode start.
Fix a sufficiently large proper-record index $N>m$, put
\[
H_N=a_N,\qquad X=pH_N,
\]
and define $L,Y,K,\kappa$ by \eqref{eq:cutoffs} and \eqref{eq:kappa}. Take
$N$ large enough that $Y>\max T$. Write $F=F_N$, $P=P_N$, and $G=G_N$.
Every occurrence of $\kappa$ below uses the cutoff $Y$ determined by this
chosen record.

We first construct admissible prime exchanges from good sources to historical
proper values. We then package all such exchanges at the chosen record into a
weighted exchange multigraph.

\subsection{Constructing the proper alternative}

Fix a good source $f=a_i$, and write
\[
A_f=a_{i-2},\qquad B_f=a_{i-1},
\]
so its local neighborhood is
\[
A_f\longrightarrow B_f\longrightarrow f.
\]
We call $A_f$ the \emph{source lag-two mask}: it is fixed once the source
occurrence $f=a_i$ is fixed, and any replacement compared at that selection
must avoid its prime divisors.

\begin{definition}[Protected outside prime of a source]
\label{def:source-protected-prime}
For a good source $f=a_i$ with predecessor $B_f$, define $h_f$ to be the
smallest prime satisfying
\[
\begin{array}{c|c}
B_f\text{ free}&h_f\in P(B_f)\cap P(f),\\
B_f\text{ full}&h_f\in P(f)\setminus P(B_f).
\end{array}
\]
We call $h_f$ the \emph{protected outside prime of $f$}. In the free case its
existence follows from overlap; in the full case it follows from novelty.
Moreover $h_f\notin T$, and $h_f\nmid A_f$ because $h_f\mid f$. This is the
source-specific choice of the protected outside prime introduced in the proof
of Lemma~\ref{lem:protected-replacement}. Once $f$ is fixed, $h_f$ is fixed
and is used in every admissible prime exchange from $f$.
\end{definition}

\begin{definition}[Admissible prime exchange]\label{def:prime-exchange}
Let $f=a_i$ be a good source with fixed source mask $A_f$ and protected
outside prime $h_f$ from Definition~\ref{def:source-protected-prime}. Let $c$
be the entire target part of $f$, and let $r$ be a prime dividing $f$ such
that $r>Y$ and $r\ne h_f$. Define
\begin{equation}
d=\frac{f}{cr},
\qquad\text{so that}\qquad
f=cdr.
\label{eq:source-factorization}
\end{equation}
Since $f$ is good, $r$ occurs once, $(d,\rad(T))=1$, $r\nmid d$, and
$h_f\mid d$.

For this choice of $r$, define the \emph{exchange exclusion product} by
\begin{equation}
E_{f,r}=A_fd.
\label{eq:exchange-exclusion-product}
\end{equation}
For $t\in\{p,q\}$, define the \emph{exchange interval}
\begin{equation}
I_{f,r,t}=\left(\frac{3cr}{4t},\frac{cr}{t}\right).
\label{eq:exchange-interval}
\end{equation}
A prime $\ell$ is an \emph{admissible inserted prime} for $(f,r,t)$ if
\[
\ell\in I_{f,r,t},\qquad \ell\nmid E_{f,r}.
\]
Its corresponding \emph{exchange destination} is
\begin{equation}
w=td\ell.
\label{eq:destination}
\end{equation}
We call the data $(f,r,t,\ell)$ an \emph{admissible prime exchange} from
$f$ to $w$.
\end{definition}

The definition is conditional: it does not yet assert that an admissible
inserted prime $\ell$ exists. Section~\ref{sec:weighted} will show that, at
sufficiently large records, every eligible choice of $r$ and $t$ has many such
primes.

Every full target part contains both $p$ and $q$, so $c/t\ge2$. Thus every
admissible inserted prime satisfies $\ell>3r/2>Y$. In particular, $\ell$ is
not a target prime, differs from $r$, and is absent from $f$ because it
divides neither $c$ nor $d$.

\begin{lemma}[The exchange is a historical proper value]
\label{lem:exchange-admissible}
Let $T$ be a finite set of at least two primes, write $p<q$ for its two
smallest members, suppose $\operatorname{Sat}(T)$ fails, and let $n_0$ be a
$\neg\operatorname{Sat}$-cutoff for $T$. Suppose infinitely many $T$-covered
episodes begin after $n_0$, let $m+1$ be the first such episode start, and fix
a sufficiently large proper-record index $N>m$. Let $f=a_i$ be a good source
at this record, and let $w$ be the destination of an admissible prime exchange
from $f$ in Definition~\ref{def:prime-exchange}. Then $w$ is proper, locally
admissible at the selection of $f$, and satisfies
\[
\frac34f<w<f.
\]
Consequently it occurred before $f$. Moreover,
\begin{equation}
\kappa(w)=\kappa(f).
\label{eq:preserved-kappa}
\end{equation}
\end{lemma}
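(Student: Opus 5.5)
Everything here is a direct check from the factorization $f=cdr$ of \eqref{eq:source-factorization} and the destination $w=td\ell$ of \eqref{eq:destination}. We verify, in order, the size bounds, the support of $w$ (which gives properness), the three local admissibility conditions at the selection of $f$, and finally $\kappa(w)=\kappa(f)$. Historicity then follows from Lemma~\ref{lem:smaller-historical}.

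\textbf{Size and properness.} Since $\ell\in I_{f,r,t}$, we have $3cr/4<t\ell<cr$. Multiplying by $d$ gives $\tfrac34 f<w<f$. For the support, $(d,\rad(T))=1$ and $\ell>Y>\max T$, so $t$ is the only target prime dividing $w$. Hence $P(w)\cap T=\{t\}$, a nonempty proper subset of $T$, and $w$ is $T$-proper.

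\textbf{Local admissibility at the selection of $f$.} We check the three conditions relative to $A_f$ and $B_f$.
\begin{enumerate}
\item \emph{Lag-two disjointness.} We need $P(w)\cap P(A_f)=\varnothing$. The prime $t$ divides $f$, so by lag-two disjointness at $f$ it does not divide $A_f$. Every prime of $d$ divides $f$, so none divides $A_f$ for the same reason. Finally $\ell\nmid E_{f,r}=A_fd$ by admissibility of $\ell$.
\item \emph{Overlap and novelty with $B_f$.} We split on the type of $B_f$, following Definition~\ref{def:source-protected-prime}. The essential point is that $h_f\ne r$ and $h_f\notin T$, so $h_f\mid d\mid w$.
\begin{enumerate}
\item If $B_f$ is free, then $h_f\in P(B_f)\cap P(w)$ supplies overlap. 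Novelty comes from $t$, which is absent from the free term $B_f$.
\item If $B_f$ is full, then $t\in T\subseteq P(B_f)$ supplies overlap. Novelty comes from $h_f\notin P(B_f)$, which lies in $P(w)$.
\end{enumerate}
\end{enumerate}
Membership $w\in\mathcal A$ holds because $t$ and $h_f$ are distinct primes of $w$. Thus $w$ is locally admissible at the selection of $f$ with $w<f$, and Lemma~\ref{lem:smaller-historical} shows that it occurred before $f$.

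\textbf{Preservation of $\kappa$.} The large primes of $f$ are those of $d$ together with $r$, and none lie in $c$ because $Y>\max T$. The large primes of $w$ are those of $d$ together with $\ell$, because $t<Y$. Both $r$ and $\ell$ exceed $Y$ (recall $\ell>3r/2$). Moreover $r\nmid d$, since $r^2\nmid f$, and $\ell\nmid d$, since $\ell\nmid E_{f,r}$. Each side therefore has exactly one more large prime than $d$, so $\kappa(w)=\kappa(f)$.

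\textbf{Main obstacle.} The only delicate point is the case split on $B_f$. Overlap or novelty must come from a prime that survives the exchange, and that is exactly why $h_f$ is protected, i.e.\ why we require $r\ne h_f$. Everything else is routine.
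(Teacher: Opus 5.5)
Your proposal is correct and follows essentially the same route as the paper: the size bound from the exchange interval, properness because the target part of $w$ is exactly $t$, lag-two avoidance via $t,d\mid f$ and $\ell\nmid A_fd$, the free/full case split on $B_f$ using $h_f\mid d$, and the one-for-one swap $r\mapsto\ell$ of large primes for $\kappa(w)=\kappa(f)$. You also make explicit several small points the paper leaves implicit, namely why $h_f\mid d$, why $r\nmid d$, why $\ell>Y$, and why $w\in\mathcal A$.
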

\begin{proof}
The inequalities follow by multiplying \eqref{eq:exchange-interval} by $td$.
The entire target part of $w$ is exactly $t$, so $w$ is proper. The unchanged
factors $t,d$ avoid $A_f$, because they divide $f$; the inserted prime $\ell$
avoids $A_f$ by construction.

If $B_f$ is free, the protected outside prime $h_f\mid d$ supplies overlap
with $B_f$, and $t$ supplies novelty. If $B_f$ is full, $t$ supplies overlap,
and $h_f$ supplies novelty because $h_f\nmid B_f$. Thus $w$ is locally
admissible in both cases, and Lemma~\ref{lem:smaller-historical} makes it
historical.

Finally, the exchange removes exactly one prime above $Y$, namely $r$, and
inserts exactly one different prime above $Y$, namely $\ell$. The removed
prime has exponent one and the inserted prime does not already divide $d$.
All other large prime divisors are unchanged, proving
\eqref{eq:preserved-kappa}.
\end{proof}

The following incidence table collects the two predecessor cases. The two
columns headed $B_f$ are alternatives, not consecutive terms. A $*$ places no
restriction on that incidence.
\begin{center}
\small
\setlength{\tabcolsep}{5pt}
\begin{tabular}{lccccc}
\toprule
Prime role & $A_f$ & $B_f$ free & $B_f$ full & $f$ & $w$\\
\midrule
Chosen target $t$ & 0 & 0 & 1 & 1 & 1\\
Other targets & 0 & 0 & 1 & 1 & 0\\
Protected outside $h_f$ & 0 & 1 & 0 & 1 & 1\\
Removed large $r$ & 0 & $*$ & $*$ & 1 & 0\\
Inserted large $\ell$ & 0 & $*$ & $*$ & 0 & 1\\
\bottomrule
\end{tabular}
\end{center}
The inserted prime is new to the factorization of $f$, not necessarily new
to the sequence.

\subsection{The weighted exchange multigraph}

Our goal is to turn the abundance of good sources into a lower bound for the
number of historical proper values that they force. A single good source may
admit many prime exchanges, but different exchanges can have the same proper
destination. Thus simply counting destinations source by source would
overcount.

We handle these collisions by assigning a positive weight to every admissible
prime exchange. The weights are chosen so that every good source sends a
definite total amount of weight, while every proper destination can receive
only a strictly smaller amount. Since the total outgoing weight equals the
total incoming weight, this comparison forces more proper destinations than
good sources. That strict excess is all that will be needed for the final
contradiction.

\begin{definition}[Weighted exchange multigraph]
\label{def:exchange-multigraph}
At the chosen proper-record index $N$, let $\mathcal G_N$ be the weighted
directed bipartite multigraph defined as follows. Its two vertex classes are
the good sources at this record from Definition~\ref{def:good-source} and the
proper values selected by index $N$.

For every admissible prime exchange $(f,r,t,\ell)$ from
Definition~\ref{def:prime-exchange}, with exchange destination $w$, include one
directed edge $f\to w$ of weight
\begin{equation}
\wt(f,r,t,\ell)=\frac{\log r}{cr\,\kappa(f)}.
\label{eq:edge-weight}
\end{equation}
Distinct admissible prime exchanges give distinct edges, even when they have
the same endpoints. Thus parallel edges are allowed.

For a source $f$, let $\out(f)$ be the total weight of the edges leaving $f$;
for a destination $w$, let $\inn(w)$ be the total weight of the edges entering
$w$. The multigraph $\mathcal G_N$ is finite because there are finitely many
good sources and each exchange interval is bounded.
\end{definition}

When we later bound the incoming weight at a fixed destination $w$,
Equation~\eqref{eq:preserved-kappa} allows the denominator
$\kappa(f)$ in an edge weight to be replaced by $\kappa(w)$. This is what
makes the incoming normalization comparable with the outgoing normalization.

The factors in \eqref{eq:edge-weight} have specific roles. The prime interval
in \eqref{eq:exchange-interval} contains about $cr/(4t\log r)$ primes,
which is canceled by $\log r/(cr)$. A source offers almost $\kappa(f)$
choices of removed prime, whereas a destination has at most $\kappa(w)$
possible inserted primes. Their exact equality is what makes the remaining
normalization work in both directions.

\section{Outgoing and incoming weights}
\label{sec:weighted}

For a finite target set $T$ with at least two primes, write $p<q$ for its two
smallest members. Recall from \eqref{eq:full-target-parts} that $\mathcal C_T$
is the set of full target parts. Define their reciprocal mass by
\begin{equation}
\Sigma_T=\sum_{c\in\mathcal C_T}\frac1c
=\prod_{t\in T}\frac1{t-1}.
\label{eq:target-reciprocal-mass}
\end{equation}
This is a convergent product of geometric series; arbitrary positive target
exponents are included. We will compare
\begin{equation}
\alpha_T=\frac3{16}\left(\frac1p+\frac1q\right),
\qquad \beta_T=\Sigma_T\log(4/3).
\label{eq:transport-constants}
\end{equation}

\subsection{Outgoing weight from a good source}

\begin{lemma}[Uniform outgoing estimate]\label{lem:outgoing}
Let $T$ be a finite set of at least two primes. Suppose
$\operatorname{Sat}(T)$ fails, let $n_0$ be a
$\neg\operatorname{Sat}$-cutoff for $T$, suppose infinitely many $T$-covered
episodes begin after $n_0$, and let $m+1$ be the first such episode start. At
a sufficiently large proper-record index $N>m$, put $H_N=a_N$, $X=pH_N$,
and let $\mathcal G_N$ be the weighted exchange multigraph of
Definition~\ref{def:exchange-multigraph}. Then there is a nonnegative function
$\varepsilon_T(X)\to0$, independent of the source, such that every source
$f$ of $\mathcal G_N$ satisfies
\[
\out(f)\ge\alpha_T-\varepsilon_T(X).
\]
\end{lemma}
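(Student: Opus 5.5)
The plan is to expand $\out(f)$ as a sum over the admissible choices of removed prime $r$ and target $t\in\{p,q\}$, and then count the admissible inserted primes $\ell$ for each such pair. Fix a good source $f=cdr\cdots$ with target part $c\le K$, and let $k=\kappa(f)\ge4$. The removable primes are the prime divisors $r>Y$ of $f$ with $r\ne h_f$, so there are at least $k-1\ge\tfrac34k$ of them. For each such $r$ and each $t$, every admissible $\ell$ contributes weight $\log r/(crk)$. Hence
\[
\out(f)=\sum_{r}\sum_{t\in\{p,q\}}\frac{\log r}{crk}\,\#\{\ell\in I_{f,r,t}:\ell\nmid E_{f,r}\},
\]
and it suffices to bound each inner count from below by $(1-o(1))\,cr/(4t\log r)$, uniformly in $f,r,t$. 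Granted this, each pair $(r,t)$ contributes at least $(1-o(1))/(4tk)$. Summing over $t$ gives $(1-o(1))(1/p+1/q)/(4k)$ per removable $r$. Multiplying by the at least $\tfrac34k$ choices of $r$ then gives $\out(f)\ge(1-o(1))\tfrac3{16}(1/p+1/q)=\alpha_T-o(1)$.

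For the inner count, I would first count the primes in $I_{f,r,t}=(3cr/(4t),cr/t)$ using \eqref{eq:prime-asymptotics}. The interval has the form $(\tfrac34x,x)$ with $x=cr/t\ge2r>2Y$, and $Y=(\log X)^3\to\infty$. So the prime number theorem gives $\pi(x)-\pi(\tfrac34x)=(1+o(1))x/(4\log x)$, with the $o(1)$ uniform because $x>Y\to\infty$. I would also need $\log x=(1+o(1))\log r$ uniformly. This holds because $x/r=c/t\le K=L^4$ while $\log r>\log Y=3\log\log X$, so $\log(c/t)=O(\log L)=o(\log r)$. This is exactly where the condition $c\le K$ in the good-source definition is used.

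The remaining step is to discard the primes $\ell$ that divide $E_{f,r}=A_fd$. I expect this to be the main obstacle, since the correction must be $o(x/\log r)$ uniformly, including when $r$ is close to $Y$. By Proposition~\ref{prop:record-scale-summary}, $E_{f,r}\ll_T H_N^3\log^2H_N$. Every excluded $\ell$ exceeds $\tfrac34x\ge\tfrac32r>Y$, so the number of excluded primes is at most $\log E_{f,r}/\log Y\ll\log X/\log\log X$. I would compare this with the main term $x/\log x\gg Y/\log Y=(\log X)^3/(3\log\log X)$. The ratio is $O((\log X)^{-2})$, so the exclusions cost only a factor $1-o(1)$, uniformly in $f,r,t$.

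Finally, the error terms $\varepsilon_T(X)$ coming from the prime number theorem, the ratio $\log x/\log r$, and the exclusions depend only on $X$ (through $Y$ and $K$) and on $T$. They do not depend on the source, which gives the uniform bound claimed in the lemma.
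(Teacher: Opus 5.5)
Your proposal is correct and follows essentially the paper's own proof: a uniform prime-number-theorem count in $I_{f,r,t}$ (using $c\le K$ to get $\log x\sim\log r$), a negligible exclusion loss coming from the polynomial bound on $A_fd$, and the $\kappa(f)-1\ge\frac34\kappa(f)$ count of eligible removed primes. The only cosmetic difference is that you bound the excluded primes by $\log E_{f,r}/\log Y$ and compare counts, whereas the paper uses $\omega(A_fd)=O_T(\log X)$ and compares weights; both make the relative loss tend to zero uniformly in the source.
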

\begin{proof}
Fix an eligible removed prime $r$ and a target $t\in\{p,q\}$.

\medskip\noindent\textbf{(PS) Prime supply.}
We first obtain a lower bound for the number of candidate inserted primes
$\ell$ in the exchange interval $I_{f,r,t}$; those excluded by the exchange
exclusion product will be removed in \textup{(EL)} below.
There is a nonnegative function $\eta_T(X)\to0$ such that
\begin{equation}
\begin{split}
\#\left\{\ell\text{ prime}:\frac{3cr}{4t}<\ell<\frac{cr}{t}\right\}
&=\frac{cr}{4t\log r}\bigl(1+\delta_{c,r,t}(X)\bigr),\\
|\delta_{c,r,t}(X)|&\le\eta_T(X),
\end{split}
\label{eq:forward-prime-count}
\end{equation}
uniformly in $c\in\mathcal C_T$, $c\le K$, $r>Y$, and $t\in\{p,q\}$.

To make the uniformity explicit, write
\[
E_{\rm PNT}(U)=\sup_{z\ge U}
\left|\frac{\pi(z)\log z}{z}-1\right|.
\]
The prime number theorem gives $E_{\rm PNT}(U)\to0$. For the two endpoints
\[
A=\frac{3cr}{4t},\qquad B=\frac{cr}{t},
\]
we have $A\ge3r/2>3Y/2$ and
\[
\log A=\log r+O_T(\log K),\qquad
\log B=\log r+O_T(\log K),
\]
uniformly in the permitted parameters. Since
\[
\frac{\log K}{\log r}\le
\frac{\log K}{\log Y}
=\frac{4\log L}{3L}\longrightarrow0,
\]
applying
\[
\pi(z)=\frac{z}{\log z}\bigl(1+O(E_{\rm PNT}(3Y/2))\bigr)
\]
at $A$ and $B$ gives, uniformly in the permitted parameters,
\[
\pi(B)-\pi(A)
=
\frac{B-A}{\log r}
+
O_T\!\left(
\frac{B}{\log r}
\left(
E_{\rm PNT}(3Y/2)+\frac{\log K}{\log Y}
\right)
\right).
\]
Here we used the preceding uniform comparisons of $\log A$ and $\log B$
with $\log r$. Since $B-A=B/4$, division by $(B-A)/\log r$ shows that
the relative error tends uniformly to zero. Passing from this difference to
the strict interval changes the count by $O(1)$, whose relative contribution
is
\[
O_T\!\left(\frac{\log r}{B}\right)
=O_T\!\left(\frac{\log Y}{Y}\right).
\]
Thus \eqref{eq:forward-prime-count} holds with
\[
\eta_T(X)
=O_T\!\left(E_{\rm PNT}(3Y/2)+\frac{\log K}{\log Y}
+\frac{\log Y}{Y}\right)\longrightarrow0.
\]
The final term absorbs the possible change of at most two primes caused by
the strict endpoints. After multiplication by
the edge weight, the primes supplied by the exchange interval for this
$(r,t)$ have total weight at least
\begin{equation}
W_{\rm PS}(f;r,t)
:=
\frac{1-\eta_T(X)}{4t\,\kappa(f)}.
\tag{PS}
\label{eq:prime-supply-weight}
\end{equation}

\medskip\noindent\textbf{(EL) Exclusion loss.}
We now upper-bound the weight that may be lost when candidate inserted primes
dividing $E_{f,r}$ are excluded.
Proposition~\ref{prop:record-scale-summary}, whose hypotheses are exactly
those in the present lemma, gives
$\omega(A_fd)=O_T(\log X)$ uniformly over all exchanges at the chosen record.
Deleting those primes from the interval therefore removes weight at most
\begin{align}
W_{\rm EL}(f;r,t)
&:=
O_T(\log X)\frac{\log r}{cr\,\kappa(f)}\notag\\
&\le\frac{C_{\mathrm{exc},T}\log X\log Y}{Y\,\kappa(f)}
=\frac{\zeta_T(X)}{\kappa(f)},\qquad
\zeta_T(X)\longrightarrow0.
\tag{EL}
\label{eq:deleted-prime-weight}
\end{align}
for a fixed constant $C_{\mathrm{exc},T}$. We used $c\ge1$ and the decrease of
$(\log u)/u$ for large $u$. Here
$\zeta_T(X)=O_T(L/(\log X)^2)$, independently of the actual prime labels in
the exclusion product.

\medskip\noindent\textbf{(ER) Eligible removals.}
Let $m_f$ be the number of eligible choices of removed prime $r$. Only the
source's protected outside prime $h_f$ from
Definition~\ref{def:source-protected-prime} can be excluded. Hence
\[
\kappa(f)-1\le m_f\le\kappa(f),
\]
and because $\kappa(f)\ge4$,
\begin{equation}
\frac{m_f}{\kappa(f)}
\ge\frac{\kappa(f)-1}{\kappa(f)}
\ge\frac34.
\tag{ER}
\label{eq:eligible-removal-fraction}
\end{equation}

\medskip\noindent\textbf{Combining the three estimates.}
For each eligible pair $(r,t)$, the prime-supply bound \textup{(PS)} gives
candidate weight at least $W_{\rm PS}(f;r,t)$, while the exclusion-loss bound
\textup{(EL)} removes at most $W_{\rm EL}(f;r,t)$. Hence the admissible edges
arising from this pair contribute at least
\[
W_{\rm PS}(f;r,t)-W_{\rm EL}(f;r,t).
\]

First fix an eligible removed prime $r$ and sum over the two target choices.
Using \textup{(PS)} and \textup{(EL)},
\begin{align*}
\sum_{t\in\{p,q\}}
\bigl(W_{\rm PS}(f;r,t)-W_{\rm EL}(f;r,t)\bigr)
&\ge
\sum_{t\in\{p,q\}}
\left[
\frac{1-\eta_T(X)}{4t\,\kappa(f)}
-\frac{\zeta_T(X)}{\kappa(f)}
\right]\\
&=
\frac1{\kappa(f)}
\left[
\frac{1-\eta_T(X)}4
\left(\frac1p+\frac1q\right)
-2\zeta_T(X)
\right].
\end{align*}
The final expression no longer depends on $r$. Since there are exactly $m_f$
eligible choices of removed prime, summing this same lower bound over those
choices gives
\[
\out(f)
\ge
\frac{m_f}{\kappa(f)}
\left[
\frac{1-\eta_T(X)}4
\left(\frac1p+\frac1q\right)
-2\zeta_T(X)
\right].
\]
Set
\[
\varepsilon_T(X)
=\frac{\eta_T(X)}4\left(\frac1p+\frac1q\right)+2\zeta_T(X).
\]
Then
\[
\out(f)
\ge
\frac{m_f}{\kappa(f)}
\left[
\frac14\left(\frac1p+\frac1q\right)
-\varepsilon_T(X)
\right].
\]
Since $X\to\infty$ along these proper-record indices, we may suppose $X$
is large enough that
\[
\frac14\left(\frac1p+\frac1q\right)-\varepsilon_T(X)>0.
\]
Applying the eligible-removal bound \textup{(ER)} then gives
\[
\out(f)
\ge
\frac34\frac14\left(\frac1p+\frac1q\right)
-\varepsilon_T(X)
=
\alpha_T-\varepsilon_T(X).
\]
Since $m_f\le\kappa(f)$, the accumulated error is at most
\[
\frac{m_f}{\kappa(f)}\varepsilon_T(X)
\le\varepsilon_T(X).
\]
\end{proof}

\subsection{A weighted prime sum for the reverse count}

Lemma~\ref{lem:outgoing} showed that every good source sends a definite amount
of weight to historical proper destinations. To turn that outgoing weight into
a lower bound for the number of distinct prior proper values, we must now
control collisions: many different sources may send weight to the same
destination. Thus the next step is to bound the total incoming weight at one
fixed proper destination.

When a destination is fixed and an inserted prime $\ell$ and full target part
$c$ are specified, the possible removed primes $r$ lie in an interval of
multiplicative width $4/3$. Since the edge weight contains the factor
$(\log r)/r$, the weighted prime sum in the following lemma is precisely the
analytic estimate needed for that incoming bound.

\begin{lemma}[Fixed-ratio weighted prime sum]\label{lem:weighted-primes}
As $a\to\infty$,
\begin{equation}
\sum_{\substack{a<r<4a/3\\r\ \mathrm{prime}}}\frac{\log r}{r}
=\log(4/3)+o(1).
\label{eq:weighted-prime-interval}
\end{equation}
The error is uniform for all $a\ge a_0$ as $a_0\to\infty$.
\end{lemma}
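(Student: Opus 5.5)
The plan is to reduce the sum to the Chebyshev function $\vartheta$ by partial summation, using only the prime number theorem in the form $\vartheta(x)=x+o(x)$ from \eqref{eq:prime-asymptotics}. Write the sum as a Stieltjes integral
\[
\sum_{\substack{a<r<4a/3\\r\ \mathrm{prime}}}\frac{\log r}{r}
=\int_{a}^{4a/3}\frac{1}{x}\,d\vartheta(x),
\]
with the endpoint conventions adjusted by at most two terms, each of size $O(\log a/a)=o(1)$.

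Integrating by parts gives
\[
\int_{a}^{4a/3}\frac{d\vartheta(x)}{x}
=\frac{\vartheta(4a/3)}{4a/3}-\frac{\vartheta(a)}{a}
+\int_{a}^{4a/3}\frac{\vartheta(x)}{x^{2}}\,dx.
\]
Put $\vartheta(x)=x(1+e(x))$ and set $E(U)=\sup_{x\ge U}|e(x)|$. This quantity tends to $0$ as $U\to\infty$ by the prime number theorem. The boundary terms then equal $(1+e(4a/3))-(1+e(a))$, so their absolute value is at most $2E(a)$. The integral equals
\[
\int_a^{4a/3}\frac{dx}{x}+\int_a^{4a/3}\frac{e(x)}{x}\,dx
=\log(4/3)+O\bigl(E(a)\log(4/3)\bigr).
\]
Adding these, the total error is at most $3E(a)+O(\log a/a)$. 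This is monotone in $a$ and tends to $0$, which also gives the stated uniformity over all $a\ge a_0$.

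No step is a genuine obstacle. The only point needing care is to phrase the error through the supremum $E(a)$ rather than a pointwise $o(1)$, so that uniformity in $a\ge a_0$ is immediate. The remaining work is routine bookkeeping of the strict versus non-strict endpoints, which contribute only $O(\log a/a)$.
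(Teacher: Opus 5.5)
Your proposal is correct and follows essentially the same route as the paper: partial summation against $\vartheta$, writing $\vartheta(u)=u(1+\epsilon(u))$ so that the boundary terms cancel and the integral gives $\log(4/3)$, with the error bounded by a constant multiple of $\sup_{u\ge a}|\epsilon(u)|$ plus an $O((\log a)/a)$ endpoint adjustment. Expressing the error through this monotone supremum is also how the paper obtains uniformity in $a\ge a_0$.
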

\begin{proof}
With weak inclusion at the upper endpoint, partial summation gives
\[
\frac{\vartheta(4a/3)}{4a/3}-\frac{\vartheta(a)}a
+\int_a^{4a/3}\frac{\vartheta(u)}{u^2}\,du.
\]
Write $\vartheta(u)=u(1+\epsilon(u))$, where $\epsilon(u)\to0$ by the
prime number theorem. The two main endpoint terms cancel and the integral
of $1/u$ is $\log(4/3)$. The remaining error is bounded by a constant
multiple of $\sup_{u\ge a}|\epsilon(u)|$. Removing a prime endpoint
changes the answer by $O((\log a)/a)$. Both errors tend to zero uniformly
for $a\ge a_0$.
\end{proof}

\subsection{Incoming weight at one proper value}

\begin{lemma}[Uniform incoming estimate]\label{lem:incoming}
Let $T$ be a finite set of at least two primes. Suppose
$\operatorname{Sat}(T)$ fails, let $n_0$ be a
$\neg\operatorname{Sat}$-cutoff for $T$, suppose infinitely many $T$-covered
episodes begin after $n_0$, and let $m+1$ be the first such episode start. At
a sufficiently large proper-record index $N>m$, put $H_N=a_N$, $X=pH_N$,
and let $\mathcal G_N$ be the weighted exchange multigraph of
Definition~\ref{def:exchange-multigraph}. Then there is a nonnegative function
$\varepsilon'_T(X)\to0$, independent of the destination, such that every
proper value $w$ receiving an edge in $\mathcal G_N$ satisfies
\[
\inn(w)\le\beta_T+\varepsilon'_T(X).
\]
\end{lemma}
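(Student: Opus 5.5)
Fix a proper destination $w$ receiving at least one edge. The plan is to parametrize every edge entering $w$ by data read off from $w$, sum the weights over those data, and evaluate the inner sum with Lemma~\ref{lem:weighted-primes}.

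\textbf{Reconstructing an edge from its destination.} An edge $(f,r,t,\ell)$ into $w$ has $w=td\ell$ with $t\in\{p,q\}$. Here $t$ is the entire target part of $w$, since $(d,\rad(T))=1$ and $\ell>Y>\max T$. So $t$ is determined by $w$.

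The inserted prime $\ell$ is a prime divisor of $w$ exceeding $Y$, since $\ell>3r/2>Y$. By \eqref{eq:preserved-kappa}, $\kappa(w)=\kappa(f)$, so there are at most $\kappa(w)$ choices of $\ell$. Once $\ell$ is chosen, $d=w/(t\ell)$ is determined.

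Next choose the full target part $c\in\mathcal C_T$ with $c\le K$. Then $f=cdr$, so $f$ is determined by $r$. The constraint $\ell\in I_{f,r,t}$ rearranges to
\[
\frac{t\ell}{c}<r<\frac{4t\ell}{3c}.
\]
This is an interval of multiplicative width $4/3$ starting at $a=t\ell/c\ge \ell/K$.

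The source $f$ is a value, but the sequence has distinct terms, so it corresponds to at most one occurrence $a_i$. Hence the masks $A_f$ and $h_f$ are determined and there are no hidden multiplicities. Dropping the remaining admissibility conditions on $(f,r,\ell)$ only enlarges the sum. Using $\kappa(f)=\kappa(w)$ in the weight \eqref{eq:edge-weight}, we get
\[
\inn(w)\le\sum_{\substack{\ell\mid w\\ \ell>Y\ \mathrm{prime}}}\frac1{\kappa(w)}\sum_{\substack{c\in\mathcal C_T\\ c\le K}}\frac1c\sum_{\substack{t\ell/c<r<4t\ell/(3c)\\ r\ \mathrm{prime}}}\frac{\log r}{r}.
\]

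\textbf{Evaluating the sum.} The innermost sum is where Lemma~\ref{lem:weighted-primes} enters. We have $a=t\ell/c\ge Y/K\to\infty$, since $Y/K=(\log X)^3/L^4$. The lemma's uniformity for $a\ge a_0$ then bounds the inner sum by $\log(4/3)+o(1)$, uniformly in $\ell$, $c$ and $w$. Summing $1/c$ over $c\le K$ gives at most $\Sigma_T$ from \eqref{eq:target-reciprocal-mass}. The outer sum has at most $\kappa(w)$ terms, each carrying the factor $1/\kappa(w)$, so it contributes at most $1$. Altogether
\[
\inn(w)\le\Sigma_T\bigl(\log(4/3)+o(1)\bigr)=\beta_T+\varepsilon'_T(X),
\]
with $\varepsilon'_T(X)=\Sigma_T\sup_{a\ge Y/K}|\text{error}(a)|$, which depends only on $X$ and $T$ and tends to $0$.

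\textbf{Main obstacle.} The delicate step is the injectivity bookkeeping: showing that the tuple $(t,\ell,c,r)$, read off from $w$, really determines the edge. This rests on three facts: $t$ is forced as the target part of $w$; $\ell$ is recoverable among the large primes of $w$; and the count of such primes matches the normalization, via $\kappa(w)=\kappa(f)$. The first is delicate because $d$ contains no target primes. The second and third need $\ell>Y$ and $\ell\nmid d$. Once these are checked, the rest is the direct application of Lemma~\ref{lem:weighted-primes} together with the lower bound $a\ge Y/K$.
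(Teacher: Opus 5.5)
Your proposal is correct and follows the paper's proof step for step. It uses the same injective coding of incoming edges by $(\ell,c,r)$ with $t$ and $d=w/(t\ell)$ read off from $w$, the same enlargement to all primes $r$ in the reversed interval after rewriting weights via $\kappa(f)=\kappa(w)$, and the same use of Lemma~\ref{lem:weighted-primes}, followed by the $\Sigma_T$ bound on the $c$-sum and cancellation of $\kappa(w)$ in the $\ell$-sum (your lower bound $a\ge Y/K$ versus the paper's $a\ge pY/K$ is an immaterial difference).
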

\begin{proof}
\medskip\noindent\textbf{(EC) Edge coding.}
Our goal in this step is to encode every actual incoming edge at $w$
injectively by a triple $(\ell,c,r)$. This will let us upper-bound the
incoming weight by summing over all possible such triples in \textup{(ES)}
below.

By Lemma~\ref{lem:exchange-admissible}, every actual exchange destination
$w$ is proper with entire target part exactly one prime
$t\in\{p,q\}$. Hence $t$ is determined by $w$. For each possible inserted
prime $\ell\mid w$, $\ell>Y$, the outside cofactor is then uniquely determined:
\[
d=\frac{w}{t\ell}.
\]
Once a full target part $c\in\mathcal C_T$, $c\le K$, is chosen,
each possible removed prime $r$ determines exactly one reconstructed full
value $f=cdr$. Reversing \eqref{eq:exchange-interval} gives
\begin{equation}
\frac{t\ell}{c}<r<\frac{4t\ell}{3c}.
\label{eq:reverse-interval}
\end{equation}

For this fixed $w$, the map from actual incoming edges to triples
$(\ell,c,r)$ is injective. Indeed, the triple determines $t,d$, and then
$f=cdr$. For an actual incoming edge, this reconstructed $f$ is a selected
source. Since the sequence never repeats a value, $f$ has a unique selection
index, so its source mask $A_f$ and protected outside prime $h_f$ are fixed.
Thus the triple reconstructs the actual exchange data uniquely, and distinct
actual incoming edges cannot share the same $(\ell,c,r)$.

\medskip\noindent\textbf{(ES) Enlarged incoming sum.}
For every actual edge, \eqref{eq:preserved-kappa} rewrites its weight as
$\log r/(cr\,\kappa(w))$. By the injective coding \textup{(EC)}, we may
upper-bound the incoming weight by allowing every prime $r$ in
\eqref{eq:reverse-interval}. In particular, we drop the requirements that the
reconstructed full value was selected and good, that $r>Y$, and that its
actual exclusion product permitted the exchange. Since all added terms are
positive,
\begin{equation}
\inn(w)
\le\frac1{\kappa(w)}
 \sum_{\substack{\ell\mid w,\ \ell>Y\\\ell\ \mathrm{prime}}}
 \sum_{\substack{c\in\mathcal C_T\\c\le K}}
 \sum_{\substack{t\ell/c<r<4t\ell/(3c)\\r\ \mathrm{prime}}}
 \frac{\log r}{cr}.
\tag{ES}
\label{eq:enlarged-incoming-sum}
\end{equation}
No claim of preserved $\kappa$ is made for the additional virtual
reconstructions.

\medskip\noindent\textbf{(PI) Prime-interval estimate.}
Put $a=t\ell/c$. Its lower bound is
\[
a\ge pY/K\longrightarrow\infty.
\]
Lemma~\ref{lem:weighted-primes} therefore gives, uniformly for every
permitted $\ell,c$,
\begin{equation}
\sum_{\substack{t\ell/c<r<4t\ell/(3c)\\r\ \mathrm{prime}}}
\frac{\log r}{cr}
\le\frac{\log(4/3)+\varepsilon_X}{c},
\qquad \varepsilon_X\longrightarrow0.
\tag{PI}
\label{eq:incoming-prime-interval}
\end{equation}

\medskip\noindent\textbf{Combining the three estimates.}
The edge coding \textup{(EC)} justifies the enlarged sum \textup{(ES)}.
Apply the prime-interval estimate \textup{(PI)} to its innermost sum. This
gives
\[
\inn(w)
\le\frac1{\kappa(w)}
 \sum_{\substack{\ell\mid w,\ \ell>Y\\\ell\ \mathrm{prime}}}
 \sum_{\substack{c\in\mathcal C_T\\c\le K}}
 \frac{\log(4/3)+\varepsilon_X}{c}.
\]
Summing over $c$ costs at most the convergent reciprocal mass $\Sigma_T$,
while there are at most $\kappa(w)$ choices of $\ell$. Hence
\[
\inn(w)
\le \Sigma_T\bigl(\log(4/3)+\varepsilon_X\bigr)
=\beta_T+o(1).
\]
Taking $\varepsilon'_T(X)=\Sigma_T\varepsilon_X$ proves the bound.
The error was uniform before either sum: the $c$-sum costs the convergent
reciprocal mass, while the $\ell$-sum cancels $\kappa(w)$.
\end{proof}

\subsection{The strict inequality and episode exclusion after a
$\neg\operatorname{Sat}$-cutoff}

\begin{lemma}[The outgoing constant exceeds the incoming constant]
\label{lem:constant-margin}
Let $T$ be a finite set of at least two primes. Then
\[
\alpha_T>\beta_T.
\]
\end{lemma}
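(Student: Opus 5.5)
We reduce the claim to the two smallest target primes and then to one numerical inequality. By \eqref{eq:target-reciprocal-mass}, $\Sigma_T=\prod_{t\in T}(t-1)^{-1}$. Every prime $t$ satisfies $t-1\ge1$, so discarding the factors from $T\setminus\{p,q\}$ can only increase the product. Hence
\[
\beta_T\le\frac{\log(4/3)}{(p-1)(q-1)}.
\]
It therefore suffices to prove
\[
\frac3{16}\left(\frac1p+\frac1q\right)>\frac{\log(4/3)}{(p-1)(q-1)}.
\]
Multiplying by $(p-1)(q-1)$, this is equivalent to
\[
\frac3{16}\,(p+q)\left(1-\frac1p\right)\left(1-\frac1q\right)>\log(4/3).
\]

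Next we minimize the left side over pairs of primes $p<q$. Each factor is positive and nondecreasing in each of $p$ and $q$. Since $p\ge2$ and $q\ge3$, we have $p+q\ge5$ and $(1-1/p)(1-1/q)\ge\frac12\cdot\frac23=\frac13$. So the left side is at least
\[
\frac3{16}\cdot5\cdot\frac13=\frac5{16}=0.3125 .
\]
The extremal case is $T=\{2,3\}$, where the inequality is tightest.

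The remaining step is the elementary bound $\log(4/3)<5/16$. This is the only point needing care, because the crude estimate $\log(1+x)\le x$ gives only $1/3$, which is too weak. We instead use the alternating-series bound $\log(1+x)\le x-\frac{x^2}2+\frac{x^3}3$ for $0<x\le1$, with $x=1/3$:
\[
\log(4/3)\le\frac13-\frac1{18}+\frac1{81}=\frac{23}{81}<0.29<\frac5{16}.
\]
Combining the three displays gives $\alpha_T\ge\frac5{16}\cdot\frac{1}{(p-1)(q-1)}\cdot(p-1)(q-1)\cdot\frac{1}{1}$ in the reduced form, that is, $\alpha_T>\beta_T$, with a margin of at least about $0.02\,/\bigl((p-1)(q-1)\bigr)$.
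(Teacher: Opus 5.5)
Your argument is correct and is essentially the paper's. You bound $\Sigma_T$ by $1/((p-1)(q-1))$ and show $(p-1)(q-1)(\frac1p+\frac1q)\ge\frac53$; you factor this as $(p+q)(1-\frac1p)(1-\frac1q)$, where the paper splits it as $(1-\frac1p)(q-1)+(1-\frac1q)(p-1)$. You then check $\log(4/3)<\frac5{16}$ via the cubic Taylor upper bound for $\log(1+x)$, where the paper uses $e^{5/16}>1+\frac5{16}+\frac12(\frac5{16})^2>\frac43$.

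Two slips need fixing, though neither affects the conclusion:
\begin{enumerate}
\item $\frac13-\frac1{18}+\frac1{81}=\frac{47}{162}\approx0.2901$, not $\frac{23}{81}$. In fact $\log(4/3)>\frac{23}{81}$ and $\frac{47}{162}>0.29$, so your chain as written is false, but $\frac{47}{162}<\frac5{16}$ still closes the step.
\item Your last display literally asserts $\alpha_T\ge\frac5{16}$. What you actually proved is $\alpha_T\ge\frac{5}{16(p-1)(q-1)}>\frac{\log(4/3)}{(p-1)(q-1)}\ge\beta_T$.
\end{enumerate}
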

\begin{proof}
Let $p<q$ be the two smallest members of $T$. Every additional target prime
contributes a factor at most one to \eqref{eq:target-reciprocal-mass}, so
\[
\Sigma_T\le\frac1{(p-1)(q-1)}.
\]
Also, since $p\ge2$ and $q\ge3$,
\begin{align*}
(p-1)(q-1)\left(\frac1p+\frac1q\right)
&=\left(1-\frac1p\right)(q-1)
 +\left(1-\frac1q\right)(p-1)\\
&\ge 1+\frac23=\frac53.
\end{align*}
Using \eqref{eq:transport-constants},
\[
\frac{\alpha_T}{\beta_T}
\ge
\frac5{16\log(4/3)}.
\]
Finally,
\[
\exp(5/16)
>
1+\frac5{16}+\frac12\left(\frac5{16}\right)^2
>
\frac43,
\]
so $\log(4/3)<5/16$. Hence $\alpha_T/\beta_T>1$, which is equivalent to
$\alpha_T>\beta_T$.
\end{proof}

\begin{theorem}[Episode exclusion after a $\neg\operatorname{Sat}$-cutoff]
\label{thm:full-entry}
Suppose $\operatorname{Sat}(T)$ fails. For every
$\neg\operatorname{Sat}$-cutoff $n_0$ for $T$, only finitely many $T$-covered
episodes begin after $n_0$.
\end{theorem}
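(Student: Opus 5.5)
We argue by contradiction. Suppose $\operatorname{Sat}(T)$ fails, fix a $\neg\operatorname{Sat}$-cutoff $n_0$, and assume infinitely many $T$-covered episodes begin after $n_0$. Let $m+1$ be the first such episode start. This is exactly the standing hypothesis of Corollary~\ref{cor:full-envelope}, Proposition~\ref{prop:record-scale-summary}, and Lemmas~\ref{lem:outgoing} and~\ref{lem:incoming}. Corollary~\ref{cor:full-envelope} then supplies arbitrarily large proper-record indices $N>m$. We work at such an $N$ with $X=pH_N$ and the multigraph $\mathcal G_N$.

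\textbf{Upper bound from episode grammar.} By Corollary~\ref{cor:post-sat-episode-grammar}, each proper term selected after $m$ is immediately preceded by a full starter, and distinct proper terms have distinct starters. Hence
\[
P_N\le F_N+P_m,
\]
where $P_m$ is a constant independent of $N$.

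\textbf{Lower bound from double counting.} The total edge weight of $\mathcal G_N$ can be computed from either side:
\[
\sum_f\out(f)=\sum_w\inn(w).
\]
Lemma~\ref{lem:outgoing} makes the left side at least $G_N(\alpha_T-\varepsilon_T(X))$. Every destination is a proper value selected by index $N$, because Lemma~\ref{lem:exchange-admissible} puts it before its source $f=a_i$ with $i\le N$. Lemma~\ref{lem:incoming} therefore bounds the right side by $P_N(\beta_T+\varepsilon'_T(X))$. Destinations with no incoming edge contribute zero. Combining the two bounds,
\[
P_N\ge G_N\,\frac{\alpha_T-\varepsilon_T(X)}{\beta_T+\varepsilon'_T(X)}.
\]
Proposition~\ref{prop:record-scale-summary} gives $G_N=(1-o(1))F_N$, and Lemma~\ref{lem:constant-margin} gives $\alpha_T>\beta_T$. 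So for some fixed $\delta_T>0$ and all sufficiently large records,
\[
P_N\ge(1+\delta_T)F_N.
\]

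\textbf{Contradiction.} Combining the two bounds gives $\delta_TF_N\le P_m$, a constant. But $F_N\gg_T X/L\to\infty$ along proper records by Lemma~\ref{lem:source-population}, which is impossible. Hence only finitely many covered episodes begin after $n_0$. The conclusion holds for every cutoff, since $n_0$ was arbitrary.

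The main bookkeeping risk is making sure the error functions $\varepsilon_T$, $\varepsilon'_T$ and the $o(1)$ in $G_N=(1-o(1))F_N$ all tend to zero along the same sequence of records. They do, since each depends only on $X$ and $X\to\infty$ along records. It is also worth confirming explicitly that every destination counted in $\inn$ is among the $P_N$ proper values.
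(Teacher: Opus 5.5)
Your proposal is correct and follows the paper's proof essentially step for step. It uses the episode-grammar bound $P_N\le F_N+P_m$, double counts the total edge weight of $\mathcal G_N$ via Lemmas~\ref{lem:outgoing} and~\ref{lem:incoming}, combines $G_N=(1-o(1))F_N$ with $\alpha_T>\beta_T$ to get $P_N\ge(1+\delta_T)F_N$, and concludes by contradiction with $F_N\to\infty$. Your explicit check that every destination lies among the $P_N$ proper values, via Lemma~\ref{lem:exchange-admissible}, fills in a step the paper leaves implicit.
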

\begin{proof}
Suppose, toward a contradiction, that $\operatorname{Sat}(T)$ fails, that
$n_0$ is a $\neg\operatorname{Sat}$-cutoff for $T$, and that infinitely many
$T$-covered episodes begin after $n_0$. Let $m+1$ be the first such episode
start. By Corollary~\ref{cor:post-sat-episode-grammar}, every proper term
selected after $m$ is immediately preceded by a distinct full term. Hence, for
$N\ge m$,
\[
P_N-P_m\le F_N-F_m,
\]
and therefore $P_N\le F_N+C_0$ with the fixed constant $C_0=P_m$.
Corollary~\ref{cor:full-envelope} gives arbitrarily large proper-record
indices $N>m$.

At such an index put $H_N=a_N$, $X=pH_N$, let $\mathcal G_N$ be the weighted
exchange multigraph of Definition~\ref{def:exchange-multigraph}, and write
$F=F_N$, $P=P_N$, and $G=G_N$. Let $W_N$ be the total edge weight of $\mathcal G_N$. Summing outgoing
weights over its source vertices and incoming weights over its destination
vertices gives the exact identity
\[
W_N
=
\sum_{\substack{f\text{ source of }\mathcal G_N}}\out(f)
=
\sum_{\substack{w\text{ destination of }\mathcal G_N}}\inn(w).
\]
Lemmas~\ref{lem:outgoing} and \ref{lem:incoming}, with their uniform
errors, therefore give
\[
(\alpha_T-\varepsilon_T(X))G
\le W_N\le
(\beta_T+\varepsilon'_T(X))P.
\]
Equivalently,
\[
(\alpha_T-o(1))G\le(\beta_T+o(1))P.
\]
Proposition~\ref{prop:good-sources} gives $G=(1-o(1))F$. Therefore
\begin{equation}
P\ge\left(\frac{\alpha_T}{\beta_T}-o(1)\right)F.
\label{eq:transport-expansion}
\end{equation}
By Lemma~\ref{lem:constant-margin}, $\alpha_T/\beta_T>1$. Since $T$ is fixed,
set
\[
\delta_T=\frac12\left(\frac{\alpha_T}{\beta_T}-1\right)>0.
\]
For all sufficiently large proper-record indices, \eqref{eq:transport-expansion}
therefore gives
\[
P\ge(1+\delta_T)F.
\]
Together with $P\le F+C_0$, this implies
\[
\delta_TF\le C_0.
\]
But $F\to\infty$ along the same indices by
Lemma~\ref{lem:source-population}, a contradiction.
\end{proof}

\section{From nonsaturation to surjectivity}
\label{sec:surjectivity}

The weighted argument has a direct consequence for every nonsaturated exact
support.

\begin{lemma}[Nonsaturation forces an eventual cover]
\label{lem:nonsaturation-cover}
Let $T$ be a finite set of at least two primes. If
$\operatorname{Sat}(T)$ fails, then $T$ is an eventual prime cover.
\end{lemma}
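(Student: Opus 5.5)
We argue by contradiction using Theorem~\ref{thm:full-entry} and prime recurrence. Suppose $\operatorname{Sat}(T)$ fails, and fix a $\neg\operatorname{Sat}$-cutoff $n_0$ for $T$; such cutoffs exist by Lemma~\ref{lem:sat-finite-loss}.

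By Theorem~\ref{thm:full-entry}, only finitely many $T$-covered episodes begin after $n_0$. Each episode beginning after $n_0$ has length at most two by Corollary~\ref{cor:post-sat-episode-grammar}. The only exception is an episode already in progress at index $n_0$; but that episode also cannot run forever, because lag-two disjointness is checked at every index. For that reason, we look at the structure after a full term. In any case, only finitely many covered terms occur after $n_0$, except possibly within one episode straddling $n_0$.

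Next we rule out the possibility that covered terms eventually disappear altogether. Pick any prime $t\in T$. By Lemma~\ref{thm:prime-recurrence}, $t$ divides infinitely many terms, and every such term is $T$-covered. So infinitely many covered terms occur.

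These infinitely many covered terms must belong to finitely many episodes. The only way this can happen is if one episode is an infinite terminal tail. That episode starts either after $n_0$, in which case its length is at most two, which is impossible; or it starts at or before $n_0$. In the second case the tail is infinite and contains every term from some index on. Either way, from some index $N_T$ on, every term is $T$-covered, that is, $P(a_n)\cap T\ne\varnothing$.

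This is precisely the statement that $T$ is an eventual prime cover (Definition~\ref{def:eventual-cover}), with cover threshold $N_T$.

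The only delicate step is the bookkeeping just described. We must check that ``finitely many episodes begin after $n_0$'' together with ``infinitely many covered terms'' forces an infinite terminal covered tail. Definition~\ref{def:target-classes} explicitly allows an episode to be an infinite tail, and Corollary~\ref{cor:post-sat-episode-grammar} bounds only the length of episodes that start after $n_0$. Hence the infinite episode must be one that began at or before $n_0$. Its infinitude is exactly what yields the cover, so no contradiction with the grammar arises.

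If we want a cleaner statement, we can also observe the following. Replacing $n_0$ by any larger cutoff is still legitimate, since every larger index is again a $\neg\operatorname{Sat}$-cutoff. So, provided some free term occurs after $n_0$, every covered term after the last free term lies in a single terminal episode.
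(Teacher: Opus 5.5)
Your argument is correct and is essentially the paper's: Theorem~\ref{thm:full-entry} together with prime recurrence leaves room only for an infinite terminal covered tail (the paper phrases this contrapositively: infinitely many free terms would produce infinitely many covered episodes beginning after the cutoff). The one flaw is the remark in your second paragraph that the episode straddling $n_0$ ``cannot run forever'': it is unjustified and contradicts your own later, correct conclusion that this episode is exactly the infinite tail, so delete it.
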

\begin{proof}
Fix a $\neg\operatorname{Sat}$-cutoff for $T$. By prime recurrence, every
prime in $T$ divides infinitely many selected terms, so there are infinitely
many $T$-covered terms. If there were also infinitely many $T$-free terms,
then there would be infinitely many $T$-covered episodes, and therefore
infinitely many covered episodes beginning after the
$\neg\operatorname{Sat}$-cutoff. This contradicts
Theorem~\ref{thm:full-entry}. Hence only finitely many free terms occur, which
is exactly the eventual-cover property.
\end{proof}

\begin{lemma}[Disjoint eventual covers are impossible]
\label{lem:disjoint-covers}
Two disjoint finite prime sets cannot both be eventual prime covers.
\end{lemma}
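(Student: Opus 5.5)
The plan is to look at a single prime debut occurring late in the sequence and show that its term cannot meet both sets. Let $S_1,S_2$ be disjoint finite prime sets, and suppose toward a contradiction that both are eventual prime covers, with cover thresholds $N_{S_1},N_{S_2}$ as in Definition~\ref{def:eventual-cover}. Put $N^\ast=\max(N_{S_1},N_{S_2},2)$. Then every term $a_n$ with $n>N^\ast$ has a prime divisor in $S_1$ and a prime divisor in $S_2$.

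First I produce a suitable debut. By Lemma~\ref{lem:unseen-ceiling}, every prime eventually appears. By Lemma~\ref{lem:debut-pair}, primes debut in increasing order and at most one prime debuts at each noninitial index. So there are infinitely many debut indices, and the debuting primes tend to infinity. Choose a prime $Q$ that debuts at an index $n>N^\ast$ and satisfies $Q>\max(S_1\cup S_2)$. Both conditions can hold at once, because only finitely many primes debut by index $N^\ast$, and only finitely many primes are at most $\max(S_1\cup S_2)$.

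Next I use the shape of that debut term. Lemma~\ref{lem:debut-pair} gives $a_n=bQ$ with $b$ prime, so $P(a_n)=\{b,Q\}$. Since $Q$ exceeds every prime of $S_1\cup S_2$, we have $Q\notin S_1\cup S_2$. Because $n>N^\ast$, the term $a_n$ must meet $S_1$, and the only candidate is $b$, so $b\in S_1$. By the same reasoning $a_n$ must meet $S_2$, so $b\in S_2$. This gives $b\in S_1\cap S_2$, which contradicts disjointness.

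I expect no step to be a real obstacle. The only point needing care is choosing a debut that is late enough in index and also large enough as a prime, and the infinitude of debuts from Lemma~\ref{lem:unseen-ceiling} settles this.
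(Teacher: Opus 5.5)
Your proposal is correct and matches the paper's proof: take a debut prime $Q\notin S_1\cup S_2$ after both cover thresholds, write its debut term as $bQ$ by Lemma~\ref{lem:debut-pair}, and conclude $b\in S_1\cap S_2$. Taking $N^\ast\ge2$ (so the debut index is at least $3$, as Lemma~\ref{lem:debut-pair} requires) and choosing $Q>\max(S_1\cup S_2)$ only make explicit details that the paper leaves implicit.
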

\begin{proof}
Suppose disjoint finite sets $S,T$ are both eventual prime covers. Choose
cover thresholds $N_S,N_T$. By Lemma~\ref{lem:unseen-ceiling}, infinitely many
primes debut. Choose a debut prime $Q\notin S\cup T$ whose debut index exceeds
$\max(N_S,N_T)$. Its selected term is $bQ$ for one old prime $b$, by
Lemma~\ref{lem:debut-pair}. To meet $S$ this term requires $b\in S$, since
$Q\notin S$; to meet $T$ it similarly requires $b\in T$. This contradicts
$S\cap T=\varnothing$.
\end{proof}

\begin{proof}[Proof of Theorem~\ref{thm:main}]
Suppose an eligible integer $m$ is omitted. Then
\[
\neg\operatorname{Sat}(P(m)).
\]
By Lemma~\ref{lem:nonsaturation-cover}, the finite set
\[
S=P(m)
\]
is an eventual prime cover. Choose a cover threshold $N_S$ for $S$.

Only finitely many primes divide the first $N_S$ terms. Choose distinct primes
$r,s$ outside $S$ and outside all of those supports. The eligible integer
\[
u=rs
\]
cannot occur by time $N_S$, because neither of its prime divisors has appeared
there. It cannot occur afterward either, because its support $\{r,s\}$ is
disjoint from the eventual cover $S$. Hence $u$ is omitted, so
$\operatorname{Sat}(\{r,s\})$ fails.

Lemma~\ref{lem:nonsaturation-cover} now makes $\{r,s\}$ an eventual prime
cover. This cover is disjoint from $S$, contradicting
Lemma~\ref{lem:disjoint-covers}. Therefore every eligible integer occurs.

The seeds $1,2$ occur, and the overlap and novelty conditions require every
later term to have at least two distinct prime divisors. The terms are
distinct by construction, proving exactly the claimed range.
\end{proof}

\section*{AI disclosure}
AI tools, specifically ChatGPT, were used throughout all stages of this
research, including computational experiments, theoretical synthesis, and
multiple rounds of AI-assisted proofreading.
\clearpage
\bibliographystyle{plain}
\bibliography{references}
\end{document}